\newcommand{\vvert}{|\!|\!|}
\newcommand{\RR}{{\mathbb R}}
\newcommand{\CC}{{\mathbb C}}
\newcommand{\NN}{{\mathbb N}}
\newcommand{\ZZ}{\mathbb Z}
\def\B{{\mathcal B}}
\def\L{{\mathcal L}} 
\def\M{{\mathcal M}}
\def\P{{\mathcal P}}
\def\R{{\mathcal R}}
\numberwithin{equation}{section}
\newtheorem{theo}{Theorem}    
\newtheorem{prop}[theo]{Proposition}  
\newtheorem{coro}[theo]{Corollary}  
\newtheorem{lemma}[theo]{Lemma}  
\newtheorem{defi}[theo]{Definition}  
\begin{document}  
\title  
{  
Continuous and measurable eigenfunctions of linearly recurrent 
dynamical Cantor systems
}  

\author{Maria Isabel Cortez}
\address{Departamento de Ingenier\'{\i}a
Matem\'atica, Universidad de Chile, Casilla 170/3 correo 3,
Santiago, Chile.}
\email{mcortez@dim.uchile.cl} 

\author{Fabien  Durand}  
\address{Laboratoire Ami\'enois
de Math\'ematiques Fondamentales  et
Appliqu\'ees, CNRS-UMR 6140, Universit\'{e} de Picardie
Jules Verne, 33 rue Saint Leu, 80000 Amiens, France.}
\email{fdurand@u-picardie.fr}

\author{Bernard Host}  
\address{\'Equipe d'Analyse et Math\'ematiques Appliqu\'ees, CNRS-UMR 8050,
Universit\'e de Mar\-ne-la-Vall\'ee, 93166 Noisy-le-Grand, France.}
\email{host@math.univ-mlv.fr}

\author{Alejandro Maass}
\address{Departamento de Ingenier\'{\i}a
Matem\'atica, Universidad de Chile
and Centro de Modelamiento Ma\-te\-m\'a\-ti\-co,
UMR 2071 UCHILE-CNRS, Casilla 170/3 correo 3,
Santiago, Chile.}
\email{amaass@dim.uchile.cl}

\subjclass{Primary: 54H20; Secondary: 37B20 } 
\keywords{minimal Cantor systems, linearly recurrent dynamical 
systems, eigenvalues}

\begin{abstract} 

The class of linearly recurrent Cantor systems contains 
the substitution subshifts and some odometers. For substitution
subshifts measure--theoretical  and continuous eigenvalues are 
the same.
It is natural to ask  whether this rigidity property remains true for 
the class of linearly recurrent Cantor systems. We give partial 
answers to this question.

\end{abstract}

\maketitle  
\markboth{Maria Isabel Cortez, Fabien Durand, Bernard Host, Alejandro
Maass}{Eigenfunctions of linearly recurrent dynamical Cantor systems}

\section{Introduction}

Let $(X,T)$ be a topological dynamical system and 
$\mu$ a $T$--invariant probability measure.
When a measure-theoretical  eigenvalue $\lambda \in \CC$ of the  
system, that is   
$f\circ T=\lambda f$ for some $f \in L^2(\mu)\setminus\{0\}$, is
associated to 
a continuous eigenfunction $f:X \to \CC$ ?
In this paper we are interested in 
conditions on minimal dynamical Cantor systems 
that answer  
this question. Our motivation comes from 
 \cite{Ho} where it is proved that all eigenvalues of minimal 
substitution 
subshifts are associated to 
a continuous eigenfunction. 
Such a question also  appears in \cite{NR} where the 
authors show  that generically interval exchange transformations are 
not 
topologically weakly mixing (i.e., they do not have non 
trivial continuous eigenfunctions) 
and where they ``fully expect'' the same holds 
for (measure-theoretical) weak mixing (i.e., they do not 
have non trivial eigenfunctions). 
It is in general not true that all eigenvalues of a minimal dynamical 
system have a continuous eigenfunction as can be seen for some 
Toeplitz systems \cite{Iw,DL} and for some 
interval exchange transformations \cite{FHZ}.

In this paper we focus on linearly recurrent 
dynamical Cantor systems (also called linearly recurrent systems). 
They naturally extend the notion of substitution 
subshifts in the sense they share the same return time structure. 
Linearly recurrent subshifts were studied in 
\cite{DHS,Du1,Du2,Le}.

The paper is organized as follows. In Section~\ref{definitions} we 
define 
linearly recurrent systems by means of  nested sequence of 
Kakutani-Rokhlin 
partitions and obtain some general properties. In particular we prove 
that these systems 
are uniquely ergodic but are not strongly mixing. 

In the following section, when the dynamical system $(X,T)$ is 
linearly 
recurrent and  $\mu$ is a $T$--invariant probability measure 
we give a necessary 
condition for a complex number to be an eigenvalue. We also exhibit a 
sufficient condition for a complex 
number to be a continuous eigenvalue,
which involves the underlying matrix structure 
of the nested sequence of Kakutani-Rokhlin partitions defining 
$(X,T)$. 
This is used in the last section to prove for  natural probability 
spaces associated to families of linearly recurrent systems, 
and under a condition of ``hyperbolicity'',  that almost every system of such family has 
only continuous eigenvalues. We give in Section~\ref{sectionexample} several examples to 
illustrate the results of the paper.

\section{Definitions and background}

\label{definitions}

\subsection{Dynamical systems}

By a {\it topological dynamical system} we mean a couple $(X,T)$ 
where 
$X$ is a compact metric space and $T: X \to X$ is a homeomorphism. 
We say that it is a {\it Cantor system} if $ X $ is a Cantor space; 
that is, $ X $ 
has a countable basis of its topology which consists of closed and 
open sets
(clopen sets) and does not have isolated points. The topological
dynamical system $(X,T)$ is {\it minimal} 
whenever $X$ and the empty set are the only $T$-invariant closed 
subsets of $X$. 
We only deal here with  minimal Cantor systems.
A complex number $\lambda$ is a {\it continuous eigenvalue} 
of $(X,T)$ if there exists a continuous function $f : X\to \CC$, 
$f\not = 0$, such that 
$f\circ T = \lambda f$; $f$ is called a {\it continuous 
eigenfunction} 
associated to $\lambda$. If $(X,T)$ is minimal, then every continuous 
eigenvalue is of modulus 1 and every continuous eigenfunction has a constant 
modulus.

When $(X,T)$ is a topological dynamical system and $\mu$ is a 
$T$-invariant probability measure, i.e., $T\mu = \mu$,  defined on 
the Borel
$\sigma$-algebra $\B_X$ of $X$, we call the triple $(X,T,\mu )$ a 
{\it dynamical system}. We do not recall the definitions of 
ergodicity, weak mixing and strong mixing (see \cite{Wa} for example).
A complex number $\lambda$ is an {\it eigenvalue} 
of the dynamical system $(X,T,\mu)$ if there exists $f\in L^2 (\mu)$, 
$f\not = 0$, such that 
$f\circ T = \lambda f$, $\mu $-a.e.; $f$ is called an {\it 
eigenfunction} 
(associated to $\lambda$). If the system is ergodic, then every 
eigenvalue is of modulus 1, and every eigenfunction has a constant 
modulus. 
By abuse of language we will also say that an eigenvalue is continuous
when the associated eigenfunction is continuous.

In this paper we mainly consider topological dynamical systems 
$(X,T)$ which are uniquely ergodic, that is systems that admit a unique 
invariant probability measure; this measure is then ergodic.

\subsection{Partitions}

\label{subsecpart}

Sequences of partitions of a minimal Cantor 
system were used in \cite{HPS} to build a representation  
of the system as an adic transformation on an ordered 
Bratteli diagram. We recall some definitions and fix some notations 
we shall use along this paper.

Let $(X,T)$ be a minimal Cantor system.
A {\it clopen  Kakutani-Rokhlin partition} (CKR partition) is  a 
partition $\P$ of $X$ of the kind:
\begin{equation}
\label{eq:def-KR}
\P = \{ T^{-j} B_k ; 1\leq k\leq C , \  0 \leq  j< h_k \} 
\end{equation}
where $C$ is a positive integer, 
$B_1,\dots , B_C$ are clopen subsets of $X$ and 
$h_1,\dots,h_k$ are positive integers.  
For $1\leq k\leq C$, the $k$-th {\it tower} of $\P$ is 
the family $\{ T^{-j} B_k ; 0 \leq j <  h_k \}$, 
and the {\it base} of $\P$ is the set 
$B =\bigcup_{1\leq k\leq C}B_k$.
Let 
\begin{equation}
\label{eq:def-seq-KR}
\bigl(
\P (n)=
\{
T^{-j}B_{k} (n): 1\leq k\leq C(n),\ 0\le j<h_{k}(n)
\}
\ ; \ n\in\NN
\bigr)
\end{equation}
be a sequence of CKR  partitions. For every $n$ we write 
$B(n)$ for the base of $\P (n)$,  and we assume that $\P (0)$ is the 
trivial partition, that is $B (0)=X$, $C(0)=1$ and $h_{1} (0) = 1$.

We say that this sequence $(\P (n);n\in \NN)$ is \emph{nested} if for 
every 
$n\geq 0$ it satisfies:

\medskip

{\bf (KR1)} $B (n+1) \subset B (n)$ and

\medskip

{\bf (KR2)} $\P (n+1) \succeq \P (n)$; i.e., for all $A\in \P (n+1)$ 
there exists $A^{'}\in \P (n)$ such that $A\subset A^{'}$. 

\medskip

We consider mostly nested sequences of CKR partitions which 
satisfy also the properties: 

\medskip

{\bf (KR3)} $ \cap_{n=0}^{\infty} B (n) $ consists of a unique point;

\medskip

{\bf (KR4)} the sequence of partitions spans the topology of $X$;

\medskip

In \cite{HPS} it is proven that for each minimal Cantor system 
$(X,T)$ 
there exists a nested sequence of CKR partitions fulfilling
(KR1)-(KR4) (i.e., (KR1), (KR2), (KR3) and (KR4)) and the following 
conditions:

\medskip

{\bf (KR5)} for all $n\geq 1$, $1\leq k \leq C (n-1)$, $1\leq l\leq C 
(n)$, 
there exists $0 \leq j < h_{l} (n)$ such that $T^{-j} B_{l}(n) 
\subset B_{k}(n-1)$;

\medskip

{\bf (KR6)} for all $n\in \NN$, $B (n+1) \subset B_{1}(n)$.

\medskip

To such a sequence of partitions we 
associate a sequence of matrices $(M(n) ; n\ge 1 )$, where the 
matrix
$M (n) = (m_{l,k}(n); 1\leq l \leq C (n) , 1\leq k \leq C (n-1))$ is 
given by
$$
m_{l,k}(n) = 
\#
\{
0 \leq j < h_{l}(n) ; T^{-j} B_{l}(n) \subset B_{k}(n-1)
\}
.
$$

We notice that Property (KR5) is equivalent to the condition that
for every $n\ge 1$ the matrix $M(n)$ has positive entries.
As the sequence of partitions is nested, we get 
$$
h_{l}(n)=\sum_{k=1}^{C (n-1)} m_{l,k}(n)h_{k}(n-1) ,
\ \ 
n\geq 1,\ 1\leq l\leq C (n) .
$$

We rewrite this formula in a matrix form. 
For every $n\geq 0$, let $H(n)=(h_{l}(n) ; 1\leq l\leq C (n))^t$, 
that is the 
column vector with entries $h_{1}(n),h_{2}(n),\dots, h_{C (n)} (n)$. 
Then 
we have $H (n)=M (n)H (n-1)$ for $n>0$. For $n>m\geq 0$ we define 
$$
P(n,m)=M(n)M(n-1)\dots M(m+1) \hbox{ and } P(n)=P(n,1)\ .
$$
We have:
$$
P_{l,k}(n,m)=\#\bigl\{ j ; 0\leq j\leq h_l(n),\ 
T^{-j}B_l(n)\subset B_k(m)\bigr\}
$$
and
$$
P(n,m)H(m)= H(n)=P(n)H(1)\ .
$$

\subsection{Linearly recurrent systems}

\begin{defi}
A minimal Cantor system $(X,T)$ is {\it linearly recurrent} (with 
constant $L$) if there exists a nested sequence of 
CKR 
partitions 
$(\P(n)=\{T^{-j}B_{k}(n); 1\leq k\leq C(n), 0\le j<h_{k}(n)\};n\in\NN 
)$ 
satisfying (KR1)-(KR6) and

\medskip

{\bf (LR)}
there exists $L$ such that for all $(l,k) \in \{ 1, \dots , C(n) \} 
\times 
\{ 1 , \dots , C(n-1) \}$ and all $n\geq 1$ 
$$
 h_{ l}(n)\leq L \ h_{k}(n-1)\ .
$$
\end{defi}

The notion of linearly recurrent dynamical Cantor system 
(also called linearly recurrent system) is the extension of the 
concept
of linearly recurrent subshift introduced 
 in \cite{DHS}. Of course it can be proved that linearly recurrent
 subshifts are linearly recurrent 
Cantor systems (see \cite{Du1,Du2}). 
Examples of such systems are substitution subshifts \cite{DHS} and 
 Sturmian subshifts whose associated rotation number has a continued 
 fraction with bounded coefficients \cite{Du1,Du2}.

\begin{lemma}
\label{cnborne} 
Let $(X,T)$ be a linearly recurrent system 
and $(\P (n) ; n\in \NN )$ a sequence of CKR partitions satisfying 
Properties 
(KR1)-(KR6), and Property (LR) with constant $L$. Then:
\begin{enumerate}
\item
for every $n\in \NN$ we have $C(n) \leq L$;
\item
for every $n\in\NN$, $1\leq k\leq C(n)$ and $1\leq k'\leq C(n)$ we 
have
$ h_{ k}(n) \leq L \ h_{k'}(n) $.
\end{enumerate}
\end{lemma}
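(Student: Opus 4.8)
The plan is to extract two elementary inequalities from the hypotheses and then obtain both conclusions by a sandwiching argument. The two building blocks are, for every $n\geq 1$ and every $1\leq l\leq C(n)$, the lower bound
\begin{equation*}
h_l(n)=\sum_{k=1}^{C(n-1)}m_{l,k}(n)\,h_k(n-1)\geq \sum_{k=1}^{C(n-1)}h_k(n-1),
\end{equation*}
which uses only that each entry $m_{l,k}(n)$ is a positive integer (this is exactly the content of (KR5), noted after its statement to be equivalent to positivity of $M(n)$), and the upper bound supplied directly by (LR),
\begin{equation*}
h_l(n)\leq L\,h_k(n-1)\qquad\text{for all }1\leq k\leq C(n-1).
\end{equation*}

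For part (1), I would fix $n\geq 1$, take $l=1$, and let $k^\ast$ be an index minimizing $h_{k^\ast}(n-1)$ among $1\leq k\leq C(n-1)$. The lower bound then gives $h_1(n)\geq C(n-1)\,h_{k^\ast}(n-1)$, while (LR) applied with $k=k^\ast$ gives $h_1(n)\leq L\,h_{k^\ast}(n-1)$. Since $h_{k^\ast}(n-1)>0$, dividing yields $C(n-1)\leq L$. As $n\geq 1$ is arbitrary, this gives $C(n)\leq L$ for every $n\in\NN$.

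For part (2), I would fix $n\in\NN$ and indices $1\leq k,k'\leq C(n)$ and compare both heights to a single height one level up. Applying the two inequalities at level $n+1$ with $l=1$: the lower bound, keeping only the $k$-th summand and using $m_{1,k}(n+1)\geq 1$, gives $h_1(n+1)\geq h_k(n)$, while (LR) applied with $k'$ gives $h_1(n+1)\leq L\,h_{k'}(n)$. Chaining these produces $h_k(n)\leq L\,h_{k'}(n)$, as desired.

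I do not expect a genuine obstacle here; the only point requiring a little care is recognizing that the ``all entries at least $1$'' lower bound, once summed over all $C(n-1)$ towers, is precisely what converts the single-tower upper bound (LR) into a bound on the \emph{number} of towers $C(n-1)$ in part (1). Everything else is immediate from the definitions and the positivity of the transition matrices.
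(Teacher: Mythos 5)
Your proof is correct and follows essentially the same route as the paper's: both parts combine the lower bound $h_l(n)\geq\sum_k m_{l,k}(n)h_k(n-1)\geq\sum_k h_k(n-1)$ coming from the positivity of the entries of $M(n)$ (i.e.\ (KR5)) with the upper bound from (LR), exactly as the authors do. The only cosmetic difference is an index shift in part (1) and that in part (2) you keep a single summand where the paper keeps the whole sum; neither changes the argument.
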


\begin{proof}
Property  (1)  follows directly from the hypotheses (KR5) and (LR). 
Indeed $ C(n) h_{i}(n) \leq h_{ 1}(n+1) \leq L h_{i}(n)$ where 
$h_{i}(n) = \min \{ h_{k}(n) ; 1\leq k\leq C(n) \}$.

\medskip

In a similar way we prove Property (2). 
From (KR5) it comes that all $h_{i}(n+1)$ are greater than 
$\sum_{j=1}^{C(n)} h_{j}(n)$. 
Consequently, from (LR) we obtain for all $1\leq k\leq C(n)$ and 
$1\leq k'\leq C(n)$
$$
h_{k}(n) 
\leq
\sum_{j=1}^{C(n)} h_{j}(n)
\leq
h_{i}(n+1) 
\leq  
L h_{k'}(n) .
$$ 
This ends the proof.
\end{proof}

From this lemma we deduce that the set $\{ M(n) ; n \geq 1 \}$ is 
finite. The following proposition, whose proof is left to the reader,
tells us this is in fact a necessary 
and sufficient condition to be linearly recurrent.

\begin{prop}
Let $(X,T)$ be a  minimal Cantor system.
The system $(X,T)$ is linearly recurrent if and only if there exist
a nested sequence of CKR 
partitions 
$(\P(n) ; n\in\NN )$, 
satisfying (KR1)-(KR6),
and a constant $K$ such that: for 
all $n\geq 1$ and all $(l,k)\in \{ 1, \dots , C(n) \} \times \{ 1 , 
\dots , C(n-1) \}$, 
$$
1\leq m_{l,k} (n) \leq K ,
$$
where $( M(n) = (m_{l,k}(n); 1\leq l \leq C(n) , 1\leq k \leq C(n-1)) ; 
n\geq 1 )$ be the associated sequence of matrices. 
\end{prop}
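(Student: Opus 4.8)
The statement is an equivalence, so I would prove the two implications separately, the forward one being essentially a rereading of what is already established. \emph{Necessity.} Assume $(X,T)$ is linearly recurrent with constant $L$ through a sequence $(\P(n))$ satisfying (KR1)--(KR6) and (LR). I would keep the same sequence and show that the matrices $M(n)$ have entries in $\{1,\dots,L\}$. The lower bound $m_{l,k}(n)\ge 1$ is just a restatement of (KR5), which the excerpt already notes is equivalent to positivity of $M(n)$. For the upper bound I would use the height recurrence $h_l(n)=\sum_{k=1}^{C(n-1)}m_{l,k}(n)h_k(n-1)$: retaining a single summand gives $m_{l,k}(n)h_k(n-1)\le h_l(n)$, and (LR) gives $h_l(n)\le L\,h_k(n-1)$, whence $m_{l,k}(n)\le L$. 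So $K=L$ works and this direction is immediate.

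\emph{Sufficiency.} Assume a sequence $(\P(n))$ satisfying (KR1)--(KR6) together with $1\le m_{l,k}(n)\le K$; I want to produce a constant $L$ witnessing (LR) for the same sequence. The plan is to reduce (LR) to two facts: comparability of the heights inside a level, and a uniform bound on the number of towers $C(n)$. Comparability is immediate: for any $l,l'$ the recurrence and the entry bounds give $h_l(n)=\sum_k m_{l,k}(n)h_k(n-1)\le K\sum_k h_k(n-1)\le K\sum_k m_{l',k}(n)h_k(n-1)=K\,h_{l'}(n)$, so all heights at level $n$ agree up to the factor $K$. Granting moreover a bound $C(n)\le C_{\max}$, I would then estimate, for any $l,k$, $h_l(n)\le K\sum_j h_j(n-1)\le K\,C_{\max}\max_j h_j(n-1)\le K^2C_{\max}\,h_k(n-1)$, where the last step uses the comparability at level $n-1$. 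This is exactly (LR) with $L=K^2C_{\max}$.

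The main obstacle is the uniform bound on $C(n)$; note that it is unavoidable, since (LR) forces $C(n)\le L$ by Lemma~\ref{cnborne}. I would try to obtain it by mimicking the proof of that lemma: from (KR5) one has $h_1(n+1)=\sum_{k}m_{1,k}(n+1)h_k(n)\ge\sum_k h_k(n)\ge C(n)\min_k h_k(n)$, so it suffices to bound $h_1(n+1)$ by a fixed multiple of $\min_k h_k(n)$. This is precisely the delicate point: the raw inequality $h_1(n+1)\le K\sum_k h_k(n)$ only feeds $C(n)$ back into itself, so to break the loop one must use more than the entrywise bounds, namely that the partition structure forces the family $\{M(n);n\ge 1\}$ to take only finitely many values (in particular with bounded dimensions). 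Once $C(n)$ is controlled, the two displays of the previous paragraph close the argument.
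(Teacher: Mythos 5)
The paper does not actually supply a proof of this proposition (it is explicitly ``left to the reader''), so your attempt has to stand on its own. Your necessity direction does: $m_{l,k}(n)\ge 1$ is (KR5), and $m_{l,k}(n)h_k(n-1)\le h_l(n)\le L\,h_k(n-1)$ gives $m_{l,k}(n)\le L$, so $K=L$ works. Likewise, \emph{granted} a uniform bound $C(n)\le C_{\max}$, your two displayed estimates correctly yield (LR) with $L=K^2C_{\max}$ (the comparability step is only needed at levels $n-1\ge 1$, and level $0$ is trivial).

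The gap is the bound on $C(n)$ itself, which you correctly identify as the crux but then dispose of by asserting that ``the partition structure forces the family $\{M(n);n\ge 1\}$ to take only finitely many values.'' That assertion is not a consequence of the stated hypotheses, and no argument for it is given. Concretely, let $M(n)$ be the all-ones $C(n)\times C(n-1)$ matrix with $C(n)=n+1$: every hypothesis of the proposition's right-hand side is met with $K=1$ (positivity gives (KR5); the diagram is simple, can be properly ordered so as to satisfy (KR3) and (KR6), and its cylinder partitions satisfy (KR1)--(KR4)), yet $C(n)\to\infty$, so by Lemma~\ref{cnborne} this particular sequence can never satisfy (LR). Hence your strategy --- verifying (LR) for the \emph{same} sequence --- provably cannot work from the hypotheses as you use them; to salvage the statement one must either import the boundedness of $C(n)$ as an additional hypothesis (this is what the sentence preceding the proposition, namely finiteness of the set $\{M(n);n\ge 1\}$, actually encodes: bounded dimensions \emph{and} bounded entries), or else construct a genuinely different nested sequence of CKR partitions for the same system, which your proposal does not attempt. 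As written, the sufficiency direction is therefore incomplete at exactly the point where all of its content lies.
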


\subsection{Unique ergodicity and absence of strong mixing of linearly
  recurrent systems}

In this subsection $(X,T)$ is a linearly recurrent system with a 
nested sequence of CKR 
partitions 
$(\P(n)=\{T^{-j}B_{k}(n); 1\leq k\leq C(n), 0\le j<h_{k}(n) 
\};n\in\NN )$ 
satisfying (KR1)-(KR6) and (LR) with constant $L$. Let 
$( M(n) = (m_{l,k}(n); 1\leq l \leq C (n) , 1\leq k \leq C(n-1)) 
; n\geq 1 )$ be the associated sequence of matrices.

We notice that for each $T$--invariant probability measure 
$\mu$ and for 
every $n \ge 1$ 
and $1\leq k\leq C(n-1)$ we have
\begin{equation}
\label{B1}
\mu(B_k(n-1))=\sum_{l=1}^{C(n)}m_{l,k}(n)\,\mu(B_l(n)) 
\end{equation}
and
\begin{equation}
\label{B2}
\sum_{k=1}^{C(n)}h_k(n)\mu(B_k(n))=1\ .
\end{equation}

To prove that  linearly recurrent systems are uniquely ergodic we need the 
following lemma 
that is used through all this paper.

\begin{lemma}
\label{binf}
Let $\mu$ be an invariant measure of $(X,T)$. Then, for all $n\in 
\NN$ and $1\leq k\leq C(n)$ we have 
$$
h_{k}(n)\mu(B_{k}(n))\geq \frac{1}{L} .
$$
\end{lemma}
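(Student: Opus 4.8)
The plan is to bound $h_k(n)\mu(B_k(n))$ from below by exploiting the tower structure one level higher, at $n+1$, rather than working within level $n$ alone. The starting point is the normalization \textup{(B2)} applied at level $n+1$, namely $\sum_{l=1}^{C(n+1)} h_l(n+1)\mu(B_l(n+1)) = 1$, and I will massage this identity into the desired lower bound for the fixed quantity $h_k(n)\mu(B_k(n))$, for a given $k$ with $1\leq k\leq C(n)$.

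First I would carry out two estimates. Property \textup{(LR)} gives $h_l(n+1) \leq L\, h_k(n)$ for every $l \in \{1,\dots,C(n+1)\}$ (with our fixed $k$), so replacing each height $h_l(n+1)$ in \textup{(B2)} by $L\, h_k(n)$ yields
$$
1 = \sum_{l=1}^{C(n+1)} h_l(n+1)\mu(B_l(n+1)) \leq L\, h_k(n)\sum_{l=1}^{C(n+1)}\mu(B_l(n+1)).
$$
Next I would invoke the relation \textup{(B1)} at level $n+1$, which reads $\mu(B_k(n)) = \sum_{l=1}^{C(n+1)} m_{l,k}(n+1)\mu(B_l(n+1))$. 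Since \textup{(KR5)} is equivalent to all entries of $M(n+1)$ being positive, and these entries are nonnegative integers, we have $m_{l,k}(n+1)\geq 1$ for every $l$, whence $\sum_{l=1}^{C(n+1)}\mu(B_l(n+1)) \leq \mu(B_k(n))$. Chaining the two displayed inequalities gives $1 \leq L\, h_k(n)\mu(B_k(n))$, which is exactly the claim; the case $n=0$ is immediate since then $h_1(0)\mu(B_1(0)) = 1$.

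The step I expect to be the genuine obstacle is the decision to pass to level $n+1$ in the first place. The more natural attempt keeps everything at level $n$ and tries to compare the individual tower masses $h_j(n)\mu(B_j(n))$ to $h_k(n)\mu(B_k(n))$ using the within-level height comparison of Lemma~\ref{cnborne}(2); this needs to be supplemented by a comparison of base masses $\mu(B_j(n))$ against $\mu(B_k(n))$, and chaining the two comparisons only yields a constant of order $1/L^2$ rather than the sharp $1/L$. The efficiency of the cross-level argument comes from the fact that \textup{(LR)} controls \emph{all} the level-$(n+1)$ heights uniformly by the single quantity $L\,h_k(n)$, while the positivity of $M(n+1)$ lets the single base $B_k(n)$ absorb the entire collection of level-$(n+1)$ bases at once. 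Once this coupling between the two levels is arranged, the remainder is a direct substitution.
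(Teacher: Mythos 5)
Your proof is correct and is essentially the paper's own argument, merely run in the opposite direction: the paper starts from $\mu(B_k(n))\geq\sum_l\mu(B_l(n+1))$ (from \eqref{B1} and positivity of $M(n+1)$), multiplies by $h_k(n)$, applies (LR), and identifies the resulting sum as $1/L$ via \eqref{B2}, which is exactly your chain of inequalities. No substantive difference.
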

\begin{proof}
Fix $k$ with $1\leq k\leq C(n)$. By Equation~\eqref{B1}, since all the entries 
of 
$M(n+1)$ are positive, we get 
$$
\mu(B_k(n))\geq\sum_{\ell=1}^{C(n+1)}\mu(B_l(n+1))\ .
$$
By (LR), for every $l$ we have $h_k(n)\geq h_l(n+1)/L$ thus
$$
h_k(n)\mu (B_k(n))\geq \sum _{\ell=1}^{C(n+1)}
\frac{h_l(n+1)}{L}\mu(B_l(n+1))=\frac 1L\ .
$$
\end{proof}

\begin{prop}
Every linearly recurrent system is uniquely ergodic.
\end{prop}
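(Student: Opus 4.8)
The plan is to show that the sequence of Kakutani–Rokhlin partitions forces every invariant measure to assign the \emph{same} value to each base set $B_k(n)$, and hence to every clopen set, which gives uniqueness. Since the partitions span the topology by (KR4), two invariant measures that agree on all the sets $T^{-j}B_k(n)$ must agree on a generating algebra and therefore coincide. So the whole problem reduces to controlling the numbers $\mu(B_k(n))$ uniformly over all invariant $\mu$.

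First I would work with the normalized vector of base measures. For an invariant measure $\mu$ and $n\ge 1$, write $V(n)=(\mu(B_l(n)))_{1\le l\le C(n)}$. Equation~\eqref{B1} says that $V(n-1)=M(n)^t V(n)$, so the coordinates of $V$ are transported backwards by the (strictly positive) matrices $M(n)^t$. The normalization~\eqref{B2} reads $H(n)\cdot V(n)=1$, and Lemma~\ref{binf} gives the crucial lower bound $h_k(n)\mu(B_k(n))\ge 1/L$ for every $k$. Combined with Lemma~\ref{cnborne}(2), which compares the heights $h_k(n)$ within level $n$ up to the factor $L$, these estimates pin the relative sizes of the coordinates: for any two indices $k,k'$ one gets
\[
\frac{\mu(B_k(n))}{\mu(B_{k'}(n))}
=\frac{h_{k'}(n)}{h_k(n)}\cdot\frac{h_k(n)\mu(B_k(n))}{h_{k'}(n)\mu(B_{k'}(n))},
\]
and each factor on the right lies in an interval depending only on $L$ (using $h_k(n)\mu(B_k(n))\le 1$ from~\eqref{B2} together with Lemma~\ref{binf}). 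Thus the normalized coordinate ratios stay in a fixed compact set of $(0,\infty)$ independent of $n$ and of $\mu$.

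The main engine is then a Perron–Frobenius / Birkhoff-contraction argument: since the matrices $M(n)$ are strictly positive with entries bounded (the set $\{M(n)\}$ is finite, as noted after Lemma~\ref{cnborne}), the backward action $V(n)\mapsto M(n)^t V(n)$ contracts the Hilbert projective metric on the positive cone by a factor uniformly bounded away from $1$. Given two invariant measures $\mu,\mu'$ with base vectors $V(n),V'(n)$, the projective distance between $V(n)$ and $V'(n)$ is forced to decay geometrically as one pushes down from level $n$ to level $1$; letting $n\to\infty$ shows that $V'(n)$ and $V(n)$ are projectively equal for every $n$, and the common normalization~\eqref{B2} then makes them literally equal. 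Hence $\mu(B_k(n))=\mu'(B_k(n))$ for all $n,k$, and by (KR4) the measures agree on a basis of clopen sets, so $\mu=\mu'$.

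The hard part will be making the contraction quantitative and uniform in $n$: I must verify that the Birkhoff contraction coefficient is bounded strictly below $1$ using only the linear-recurrence data (finiteness of $\{M(n)\}$ together with strict positivity), and then confirm that the infinite backward product collapses projective distances to zero. An alternative, more elementary route avoids the projective metric entirely and argues directly from the inequalities above: the bounds from Lemma~\ref{binf} and Lemma~\ref{cnborne} already confine each $h_k(n)\mu(B_k(n))$ to the interval $[1/L,1]$ for \emph{every} invariant $\mu$, and by exploiting~\eqref{B1} at successive levels one can squeeze the possible values of $\mu(B_k(n))$ until only one choice survives in the limit. Either way, the role of (LR) is exactly to provide the $n$-independent constants that prevent the masses from degenerating, which is what ultimately yields a single invariant measure.
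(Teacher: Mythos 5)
Your proposal is correct and is essentially the paper's own argument: the paper carries out exactly this Hilbert/Birkhoff projective contraction by hand, setting $r_n=S_n/s_n$ with $S_n=\max_k \mu'_{n,k}/\mu_{n,k}$ and $s_n=\min_k \mu'_{n,k}/\mu_{n,k}$ (so $\log r_n$ is the projective distance between $V(n)$ and $V'(n)$), deriving the explicit contraction inequality $r_{n-1}\leq\phi(r_n)$ with $\phi(x)=\frac{(1-\delta)x+\delta}{\delta x+(1-\delta)}$, where the uniform constant $\delta$ comes from Lemma~\ref{binf} exactly as you anticipate. Iterating $\phi$ and using its boundedness at infinity gives $r_n=1$, hence projective equality, and the normalization~\eqref{B2} finishes the proof just as in your outline.
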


\begin{proof}
Let  $(X,T)$ be a  linearly recurrent system.
Given a $T$--invariant probability measure $\mu$, we define the 
numbers
$$
\mu_{n,k}=\mu(B_{k}(n)),\ n\geq 0,\; 1\leq k\leq C(n)\ .
$$
These nonnegative numbers satisfy the relations
\begin{equation}
\label{eq:rel-mu}
\mu_{0,1}=1\text{ and }\mu_{n-1,k}=\sum_{l=1}^{C_n}\mu_{n,l}m_{l,k}(n)
\text{ for }
n\ge 1,\ 1\leq k\leq C (n-1)\ .
\end{equation}
In a matrix form: with $V(n)=(\mu_{n,1},\dots,\mu_{n,C(n)})$, we have
$V(n-1)=V(n)M(n)$.
Conversely, let the nonnegative numbers $(\mu_{n,k} ; n\geq 0,\ 
1\leq k\leq C(n)$) satisfy these conditions. As the partitions $\P 
(n)$ 
are clopen and span the topology of $X$, it is immediate to check 
that 
there exists a unique invariant probability measure $\mu$ on $X$ with
$\mu_{n,k}=\mu(B_{k}(n))$ for every $n\in \NN$ and $k\in \{ 1, \dots 
, C(n)\}$.

From Lemma~\ref{binf},
there exists a constant $\delta>0$ such that 
$$
\mu_{n,i}\geq 
\delta\mu_{n-1,k}
$$
for every $n\geq 1$ and $(i,k)\in \{ 1, \dots , C(n)\}\times \{ 1 , 
\dots , C(n-1) \}$,
and every invariant measure $\mu$. Without loss of generality we can 
assume $\delta<1/2$. 

Let $\mu,\mu'$ be two invariant measures, and $\mu_{n,k},\mu'_{n,k}$ 
be defined as above. We define
$$
 S_n=\max_k\frac{\mu'_{n,k}}{\mu_{n,k}} =\frac{\mu'_{n,i}}{\mu_{n,i}} 
\ ,\ 
 s_n=\min _k\frac{\mu'_{n,k}}{\mu_{n,k}}=\frac{\mu'_{n,j}}{\mu_{n,j}} 
\ , 
\hbox{ and } 
 r_n=\frac{S_n}{s_n}
$$
for some $i,j$.
For every $k\in \{ 1, \dots , C(n-1) \}$ we have:
\begin{align*}
\mu'_{n-1,k}
&=\sum_{l\neq j}\mu'_{n,l}m_{l,k}(n)+\mu'_{n,j}m_{j,k}(n)\\
&\leq S_n\sum_{l\neq j}\mu_{n,l}m_{l,k}(n)+s_n\mu_{n,j}m_{j,k}(n)\\
&=S_n\mu_{n-1,k}-(S_n-s_n)\mu_{n,j}m_{j,k}(n)
\leq \mu_{n-1,k}S_n-(S_n-s_n)\mu_{n,j}\\
&\leq \mu_{n-1,k}s_n\bigl(r_n(1-\delta)+\delta\bigr)\ .
\end{align*}

And in similar way, for every $k\in \{ 1, \dots , C(n-1) \}$ we have

$$
\mu'_{n-1,k} \geq \mu_{n-1,k}s_n\bigl(\delta r_n+(1-\delta)\bigr)\ .
$$
We deduce that
$$
 r_{n-1}\leq \phi(r_{n})\text{ where }
 \phi(x)=\frac {(1-\delta)x+\delta}{\delta x+(1-\delta)}\ .
$$

The function $\phi$ is increasing on $[0,+\infty)$ and tends to 
$(1-\delta)/\delta$ at $+\infty$. 
Writing $\phi^m=\phi\circ\dots\circ\phi$ ($m$ times), 
for every $n,m \in \NN$ we have
$1\leq r_n\leq\phi^m(r_{n+m})\leq \phi^{m-1}((1-\delta)/\delta)$. 
Taking the limit with $m\to+\infty$, we get $r_n=1$.
\end{proof}

From now on we call  $\mu$  the unique invariant measure on $(X,T)$.
Let $m\ge 1$ and $0\leq k\leq C(m)$. By unique ergodicity, 
$$
\frac 1N\#\{  0\leq j<N ; \ T^{-j}x\in B_k(m)\}\to \mu(B_k(m))
$$
uniformly as $N\to\infty$. But for $n>m$, $1\leq l\leq C(n)$, for 
every $x\in B_k(n)$ we have
$$
\#\{ 0\leq j<h_l(n) ; \ T^{-j}x\in B_k(m)\}= P_{l,k}(n,m)\ .
$$
We deduce:
\begin{equation}
\label{B3}
\max_{1\leq l\leq C(n)}\Bigl|\frac{P_{l,k}(n,m)}{h_l(n)}
-\mu\bigl(B_k(m)\bigr)\Bigr|\to 0\text{ as }n\to+\infty\ .
\end{equation}

\begin{prop}
Linearly recurrent systems are not strongly mixing. 
\end{prop}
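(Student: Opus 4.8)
The plan is to contradict strong mixing with a single, carefully chosen clopen set. Recall that if $(X,T,\mu)$ were strongly mixing, then for every clopen $A$ we would have $\mu(A\cap T^{-t}A)\to\mu(A)^2$ as $t\to\infty$. I would take $A=B(m_0)$, the base of $\P(m_0)$, for a large fixed $m_0$, and produce times $t_n\to\infty$ along which $\mu(A\cap T^{-t_n}A)$ stays bounded below by a quantity strictly larger than $\mu(A)^2$. The natural candidate for the rigidity times is $t_n=h_1(n)$, the height of the first tower of $\P(n)$; note $t_n\to\infty$ since the system is an infinite Cantor system.

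The mechanism is a ``common initial block'' coincidence forced by (KR6). Since $B(n)\subset B_1(n-1)$, every tower of $\P(n)$, read upward from its base, begins with a full copy of tower $1$ of $\P(n-1)$; iterating, every tower of $\P(n+1)$ begins with tower $1$ of $\P(n)$, which itself begins with tower $1$ of $\P(n-1)$. Fix a tower of $\P(n+1)$ and a point $x$ in its lowest $h_1(n-1)$ levels, i.e.\ in the atom $T^{-j}B_1(n-1)$ for some $0\le j<h_1(n-1)$. The level $h_1(n)$ above the base of this $\P(n+1)$-tower is again the base of a tower of $\P(n)$ (the second $\P(n)$-subtower, or the base of the next $\P(n+1)$-tower), hence it too begins with a copy of tower $1$ of $\P(n-1)$. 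Thus $T^{-h_1(n)}$ carries the level-$j$ point $x$ of the first copy to the level-$j$ point of the second copy, both lying in the same atom $T^{-j}B_1(n-1)$. For $n-1\ge m_0$ the partitions are nested, so the two points share a $\P(m_0)$-atom, and $x\in B(m_0)$ forces $T^{-h_1(n)}x\in B(m_0)$.

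Let $F_n$ be the set of all such $x$. The number of points of $B(m_0)$ inside a copy of tower $1$ of $\P(n-1)$ is $N_{n-1}=\sum_k P_{1,k}(n-1,m_0)$, occurring at the same offsets in every $\P(n+1)$-tower, so $\mu(F_n)=N_{n-1}\,\mu(B(n+1))$. By \eqref{B3}, $N_{n-1}/h_1(n-1)\to\mu(B(m_0))$; and by Lemma~\ref{binf} together with (LR), which gives $h_l(n+1)\le L^2 h_1(n-1)$ for every $l$, one gets $h_1(n-1)\mu(B(n+1))\ge L^{-3}$. Combining,
$$
\liminf_{n\to\infty}\mu\bigl(B(m_0)\cap T^{-h_1(n)}B(m_0)\bigr)\ \ge\ \liminf_{n\to\infty}\mu(F_n)\ \ge\ \frac{\mu(B(m_0))}{L^{3}}\ .
$$
Finally I would choose $m_0$ so large that $\mu(B(m_0))<L^{-3}$, which is possible because $\mu(B(m_0))\le L/\min_k h_k(m_0)\to 0$ by Lemma~\ref{cnborne} and the growth of the heights. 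Then the right-hand side exceeds $\mu(B(m_0))^2=\mu(A)^2$, contradicting mixing.

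The main obstacle is the geometric coincidence of the second paragraph: one must verify carefully that the two copies of tower $1$ of $\P(n-1)$ are aligned at distance exactly $h_1(n)$ and carry identical $\P(n-1)$-names, and handle the degenerate case in which a $\P(n+1)$-tower contains a single $\P(n)$-subtower (resolved by passing to the next $\P(n+1)$-tower, which again begins with tower $1$ of $\P(n)$). Once this is in place, the measure estimate is a routine application of Lemma~\ref{binf} and \eqref{B3}.
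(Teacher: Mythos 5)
Your proposal is correct and follows essentially the same route as the paper: both exploit the ``common initial block'' forced by (KR6) to get rigidity along the times $h_1(n)$, and both bound the measure of the overlap from below via Lemma~\ref{binf}, (LR) and Equation~\eqref{B3}, finally choosing the level $m$ large enough that this lower bound beats $\mu(A)^2$. The only differences are cosmetic: the paper works with the single set $A=B_1(m)$ and the first tower of $\P(n)$, obtaining the constant $L^{-2}$ where you obtain $L^{-3}$ with the full base $B(m_0)$.
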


\begin{proof}
Let $m$ be an integer such that $\mu (B_1(m)) < 1/L^2$ and  
for  $n >  m$ let $D (n) = B_1(m) \cap T^{h_{1}(n)} B_1(m)$. 
We  prove that 
$\varliminf_n \mu (D (n)) > \mu (B_1(m))^2 $ 
which will imply that $(X,T,\mu )$ is not strongly mixing.

For $n>m$ we write 
$$
E (n)=\{ 0\leq j < h_1(n-1) ; \ 
T^{-j} B_1(n-1) \subset B_1(m)\}\ .
$$
By hypothesis (KR6) we have  $B(n)\subset B_1(n-1)$ and
for $j\in E(n)$ we get 
$$
T^{{-j}-h_1(n)} B_1(n) \subset T^{-j} B(n) \subset T^{-j} B_1(n-1)
\subset B_1(m)
$$
and  $T^{-j} B_1(n)\subset D(n)$. It follows that
$\mu(D(n))\geq \#E(n)\cdot \mu(B_1(n))$.
But $\#E(n)=P_{1,1}(n-1,m)$ thus  $\#E(n)/ h_1(n-1)$ converges to
$ \mu(B_1(m))$ as $n\to+\infty$ by Equation~\eqref{B3}.
Therefore
\begin{align*}
\varliminf_n\mu(D (n))
&\geq\varliminf_n h_1(n-1)\mu(B_1(n))\mu(B_1(m))\\
&\geq\varliminf_n  \frac 1L  h_1(n)\mu(B_1(n))\mu(B_1(m))
& \text{ (by (LR))}\\
&\geq \frac 1{L^2}\mu(B_1(m))
& \text{ (by Lemma \ref{binf})}
\end{align*}
and the proof is complete.
\end{proof}

It is well-known that there exist substitution subshifts, and {\it a 
fortiori} linearly recurrent systems, which are weakly-mixing (see 
\cite{Qu}).

\section{Some conditions to be an eigenvalue}

In this section we suppose $(X,T,\mu )$ is linearly recurrent, 
that is to say $(\P (n) ; n \geq 0 )$ 
satisfies (KR1)-(KR6) and (LR) (with constant $L$). 
Let $(M(n) ; n\geq 1 )$ be its associated sequence of matrices.

We give a sufficient condition to be a continuous eigenvalue and a
necessary condition to be an eigenvalue.
We define for $n \ge 1$, $1 \le k \le C(n-1)$, $1 \le l \le C(n)$,
$$
J(n,k,l)
=
\bigl\{ 0\leq j<h_{l}(n) ; \ T^{-j}B_{l}(n)\subset B_{k}(n-1)
 \bigr\}
, \ 
J(n)=\bigcup_{\substack{1\leq k\leq C(n-1) \\ 1\leq l\leq 
C(n)}}J(n,k,l)\ .
$$
so that $\#J(n,k,l)= m_{l,k}(n)$.

\begin{prop}
\label{prop:condcont}
Let $\lambda\in\CC$ satisfy
$$
 \sum_{n=1}^\infty \max_{1\leq k\leq C(n)}
 \lvert\lambda^{h_{k}(n)}-1\rvert<\infty\ .
$$
Then $\lambda$ is a continuous eigenvalue of $(X,T,\mu)$.
\end{prop}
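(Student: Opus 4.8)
The plan is to build a continuous eigenfunction as the uniform limit of explicit ``approximate eigenfunctions'' attached to the partitions $\P(n)$. For $x\in X$ and $n\ge 1$, let $j_n(x)$ denote the level of $x$ in $\P(n)$, i.e. the unique integer with $0\le j_n(x)<h_k(n)$ such that $x\in T^{-j_n(x)}B_k(n)$ for the appropriate $k$. I would set $f_n(x)=\lambda^{-j_n(x)}$. Each $f_n$ is locally constant, hence continuous, and $\lvert f_n\rvert=1$. A direct check shows $f_n(Tx)=\lambda f_n(x)$ for every $x$ with $j_n(x)\ge 1$, and that this identity can fail only on the base $B(n)$, where $Tx$ jumps to the top level of some tower; thus $f_n$ is a genuine eigenfunction off the small set $B(n)$.

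The heart of the argument is the uniform estimate $\lVert f_{n+1}-f_n\rVert_\infty\le L\,\max_{1\le k\le C(n)}\lvert\lambda^{h_k(n)}-1\rvert$. To obtain it I would exploit the nested structure: climbing the $l$-th tower of $\P(n+1)$ from its base (which lies in $B_1(n)$ by (KR6)), one successively traverses complete towers of $\P(n)$, the level $j_n$ resetting to $0$ each time the orbit re-enters the base $B(n)$. Hence $j_{n+1}(x)-j_n(x)$ is a sum of heights $h_k(n)$, one for each complete $\P(n)$-subtower lying below $x$, so that $f_{n+1}(x)/f_n(x)=\prod\lambda^{-h_k(n)}$. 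Since $\lvert\lambda\rvert=1$, the telescoping inequality $\lvert\prod z_i-1\rvert\le\sum\lvert z_i-1\rvert$ together with $\lvert\lambda^{-h_k(n)}-1\rvert=\lvert\lambda^{h_k(n)}-1\rvert$ gives the claim, provided the number of factors is bounded. This is exactly where linear recurrence enters: the number of $\P(n)$-subtowers in a $\P(n+1)$-tower equals $\sum_k m_{l,k}(n+1)\le h_l(n+1)/\min_k h_k(n)\le L$ by Lemma~\ref{cnborne} and (LR). I expect this uniform bound on the number of subtowers to be the main obstacle, in the sense that it is the one step that genuinely requires the linearly recurrent hypothesis; everything else is formal.

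With the estimate in hand, the assumption $\sum_n\max_k\lvert\lambda^{h_k(n)}-1\rvert<\infty$ makes $(f_n)$ uniformly Cauchy, so it converges uniformly to a continuous function $f$ with $\lvert f\rvert=1$, in particular $f\ne 0$. Finally I would pass the identity $f_n\circ T=\lambda f_n$ to the limit: writing $\{x_0\}=\bigcap_n B(n)$ (by (KR3)), for any $x\ne x_0$ one has $x\notin B(n)$ for all large $n$ (the sets $B(n)$ decrease by (KR1)), so $f_n(Tx)=\lambda f_n(x)$ for those $n$, and letting $n\to\infty$ yields $f(Tx)=\lambda f(x)$. Since $X$ has no isolated points, $X\setminus\{x_0\}$ is dense, and continuity of both sides extends the identity to all of $X$. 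Therefore $\lambda$ is a continuous eigenvalue of $(X,T,\mu)$.
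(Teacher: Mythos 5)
Your proof is correct and follows essentially the same route as the paper: the same locally constant functions $f_n(x)=\lambda^{-j}$, the same key estimate $\lVert f_{n+1}-f_n\rVert_\infty\le L\max_k\lvert\lambda^{h_k(n)}-1\rvert$ obtained by bounding the number of $\P(n)$-subtowers in a $\P(n+1)$-tower by $L$ via (LR), and uniform convergence. The only difference is that you spell out the final verification that the uniform limit is an eigenfunction (using (KR3) and density of $X\setminus\{x_0\}$), which the paper dismisses as clear.
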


\begin{proof}
For every $n \in \NN$, let $f_n$ be the function on $X$ defined by
$$
f_n(x)=\lambda^{-j} \text{ for } x\in T^{-j}B_{k}(n),
$$
$1\leq k\leq C(n)$ and $0\leq j<h_{k}(n)$.

We compare $f_n$ and $f_{n-1}$. By construction, for every $x$,
$f_n(x)/f_{n-1}(x)$ belongs to the set 
$\{\lambda^{-j} ; j\in J(n)\}$.
But each integer in $J(n)$ is a sum of terms of the form 
$h_{k}(n-1)$, 
and this sum contains at most $L$ terms. We get
$$
||
f_n-f_{n-1}
||_\infty
\leq 
L\max_{1\leq k\leq C(n-1)}
 \lvert\lambda^{h_{k}(n-1)}-1\rvert\ .
$$
By hypothesis, the series $\sum_{n\geq 1} || f_n-f_{n-1}||_\infty$
 converges. Thus the sequence $(f_n ; n\in \NN)$ converges uniformly 
to a 
continuous function $f$, which is clearly an eigenfunction for 
$\lambda$.
\end{proof}

\begin{prop}
\label{prop:necessaire}
If $\lambda \in \CC$ is an eigenvalue of $(X,T,\mu)$ then
$$
\sum_{n=1}^\infty 
\max_{1\leq k\leq C(n)}
\bigl|\lambda^{h_{k}(n)}-1\bigr|^2<\infty\ .
$$
\end{prop}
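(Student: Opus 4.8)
The plan is to realise $f$ as the $L^2$-limit of its conditional expectations on the partitions $\P(n)$ and to read off the wanted series from the (summable) martingale increments. First I would normalise: since the system is uniquely ergodic, hence ergodic, I may assume $|\lambda|=1$ and, after rescaling, that the eigenfunction $f$ satisfies $|f|=1$ $\mu$-a.e.\ and $\|f\|_2=1$. For $n\in\NN$ and $1\le k\le C(n)$ set $b_k(n)=\frac{1}{\mu(B_k(n))}\int_{B_k(n)}f\,d\mu$. Using $f\circ T=\lambda f$ and the $T$-invariance of $\mu$ one gets $\int_{T^{-j}B_k(n)}f\,d\mu=\lambda^{-j}\mu(B_k(n))\,b_k(n)$, so the conditional expectation $F_n:=\EE(f\mid\P(n))$ equals $\lambda^{-j}b_k(n)$ on the atom $T^{-j}B_k(n)$. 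By (KR2) and (KR4) the $\sigma$-algebras generated by the $\P(n)$ increase to $\B_X$, so $(F_n)$ is a martingale converging to $f$ in $L^2$; its increments are orthogonal and $\sum_{n\ge1}\|F_n-F_{n-1}\|_2^2=\|f\|_2^2-\|F_0\|_2^2\le1<\infty$. This finiteness is the only global input; everything else is bookkeeping of the tower structure.

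Next I would compute the increments. Writing each level-$n$ atom as a level of a level-$(n-1)$ subtower sitting inside a level-$n$ tower, one finds
\[
\|F_n-F_{n-1}\|_2^2=\sum_{l,k}\ \sum_{j_0\in J(n,k,l)} h_k(n-1)\,\mu(B_l(n))\,\bigl|\lambda^{-j_0}b_l(n)-b_k(n-1)\bigr|^2 .
\]
By Lemma~\ref{binf}, the constant $\delta$ produced in the proof of unique ergodicity, and (LR), each weight $h_k(n-1)\mu(B_l(n))$ is bounded below by a constant $\kappa_0>0$; moreover, by $C(n)\le L$ and the boundedness of the entries $m_{l,k}(n)$, the number of terms is bounded. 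Hence, setting $Q_n:=\max\bigl|\lambda^{-j_0}b_l(n)-b_k(n-1)\bigr|^2$ (maximum over all subtower copies), I get $Q_n\le\kappa_0^{-1}\|F_n-F_{n-1}\|_2^2$, so $\sum_n Q_n<\infty$. The same reasoning comparing $F_{m+2}$ with $F_{m-1}$ gives a summable quantity $Q'_m\lesssim\|F_{m+2}-F_{m+1}\|_2^2+\|F_{m+1}-F_m\|_2^2+\|F_m-F_{m-1}\|_2^2$ controlling $\bigl|\lambda^{-J_0}b_{\tilde l}(m+2)-b_a(m-1)\bigr|^2$ for every level-$(m-1)$ subtower (top $T$-height $J_0$, type $a$) inside a level-$(m+2)$ tower $\tilde l$.

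It remains to convert these ``per--copy'' estimates into the absolute bound $|\lambda^{h_k(m)}-1|$, and this is where I expect the real difficulty. The estimates alone only relate consecutive base averages by a power of $\lambda$; they never force $\lambda^{h}=1$, because the phases of the $b_k$ absorb any drift. The extra ingredient is (KR6): since $B(m+2)\subset B_1(m-1)$, the reference base $B_1(m-1)$ occupies position $0$ in every level-$(m+2)$ tower, which pins $b_{\tilde l}(m+2)$ to $b_1(m-1)$ up to $\sqrt{Q'_m}$ and hence gives $|\lambda^{-J_0}-1|\lesssim\sqrt{Q'_m}$ whenever a copy of $B_1(m-1)$ appears at $T$-height $J_0$ (using also that $|b_1(m-1)|\to1$, which follows from $\|f-F_n\|_2\to0$ and Lemma~\ref{binf}). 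I would then use (KR5) and (KR6) to place the target height: a level-$m$ subtower of type $k$ appears, in a non-terminal position, inside some level-$(m+2)$ tower, because non-degeneracy of the matrices (itself forced by linear recurrence) guarantees at least two level-$(m+1)$ blocks, so this type-$k$ subtower is followed by a further level-$m$ subtower. Both it and its successor begin, by (KR6), with a copy of $B_1(m-1)$, at $T$-heights $p$ and $p+h_k(m)$; combining $|\lambda^{-p}-1|$ and $|\lambda^{-(p+h_k(m))}-1|$ yields $|\lambda^{h_k(m)}-1|^2\le C\,Q'_m$. Summing over the at most $L$ values of $k$ and over $m$, the fixed three-level window keeps everything summable and the proposition follows. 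The crux, and the step I would treat most carefully, is exactly this combinatorial placement, ensuring that every height $h_k(m)$ is realised as a genuine return distance between two occurrences of the fixed base with only a bounded number of partition levels involved.
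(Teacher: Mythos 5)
Your proposal is correct and follows essentially the same route as the paper's proof: conditional expectations with respect to $\P(n)$, summability of the squared martingale increments, the lower bound $h_k(n-1)\mu(B_l(n))\geq L^{-2}$ from Lemma~\ref{binf} and (LR), the fact that the moduli of the base averages tend to $1$, and (KR6) to anchor all phases at copies of $B_1$. The only (harmless) difference is in the final bookkeeping — you compare $F_{m+2}$ with $F_{m-1}$ across a three-level window and use the starting positions of consecutive level-$m$ blocks, where the paper iterates the one-level estimate ``three times'' and then takes consecutive elements of $J(n)$; both versions leave the same routine degenerate case (a tower consisting of a single block) to the reader.
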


\begin{proof}
We use the sets $J(n,k,l)$ defined above.

Assume that $\lambda=\exp(2i\pi \alpha)$, $\alpha\in \RR$, is an eigenvalue, 
and that $f$ is 
a corresponding eigenfunction of modulus $1$. For every $n \in \NN$, let 
$f_n$ be the conditional expectation of $f$ with respect to the 
$\sigma$-algebra spanned by $\P_n$. For $1\leq k\leq C(n)$, $f_n$ is 
constant on $B_{k}(n)$, and we write $c(n,k)$ this constant.

The sequence $(f_n ; n\in \NN)$ is a martingale (\cite{Do}), and 
converges to $f$ in $L^2(\mu)$. Moreover the functions $f_n-f_{n-1}$, 
$n\geq 1$, are mutually orthogonal in  $L^2(\mu)$, hence we have
\begin{equation}
\label{eq:mart}
\sum_{n=1}^\infty ||{f_n-f_{n-1}}||^2_2<\infty\ 
\end{equation}
(see \cite{Do} for the details).

We fix $n\ge 1$, $1\leq l\leq C(n)$ and $1\leq k\leq C (n-1)$, and we 
choose some  $j\in J(n,k,l)$. 
Looking at the structure of the towers, we see 
that $j+h_{k}(n-1)\leq h_{l}(n)$.
For $0\leq p<h_{k}(n-1)$ we have $j+p<h_{l}(n)$, 
and $T^{-(j+p)}B_{l}(n)\subset T^{-p}B_{k}(n-1)$. 
For $x\in T^{-(j+p)}B_{l}(n)$, we have 
$f_n(x)=\exp(-2i\pi (j+p)\alpha)c(n,l)$ and 
$f_{n-1}(x)=\exp(-2i\pi  p \alpha)c(n-1,k)$.
We get
$$
||f_n-f_{n-1}||^2_2\geq h_{k}(n-1)\mu (B_{l}(n))
\lvert \exp(-2i\pi j\alpha)c(n,l)-c(n-1,k)\rvert^2\ .
$$
By Lemma \ref{binf} and (LR), $h_{k}(n-1)\mu (B_{l}(n)) \geq L^{-2}$ 
, and from Equation~\eqref{eq:mart} we get
\begin{equation}
\label{eq:mart2}
\sum_{n=1}^\infty \max_{1\leq l\leq C(n)}  \max_{1\leq k\leq C(n-1)}
\max_{j\in J(n,k,l)}
\lvert \exp(-2i\pi j \alpha )c(n,l)-c(n-1,k)\rvert^2<\infty\ .
\end{equation}

We use this bound first with $k=1$ and an arbitrary $l$. By (KR6),
$0\in J(n,1,l)$ and from Equation~\eqref{eq:mart2} we get 
$$
\sum_{n=1}^\infty \max_{1\leq l\leq C(n)} \lvert 
c(n,l)-c(n-1,1)\rvert^2<\infty\ .
$$

Using this three times, we get
\begin{equation}
\label{mart3}
\sum_{n=1}^\infty  \max_{1\leq l\leq C(n)}\max_{1\leq k\leq C(n-1)}
|c(n,l)-c(n-1,k)|^2<\infty\ .
\end{equation}

For each $n\in \NN$ and $1\leq k\leq C(n)$, the function $|f_n|$ is constant 
and equal to $|c(n,k)|$ on the $k$-th tower of $\P (n)$. By Lemma~\ref{binf},
the measure of this tower is not less than $1/L$. 
Since $\lVert f_n\rVert_2\to \lVert f\rVert_2=1$, we get that 
$\inf_k|c(n,k)|$ converges to $1$ when $n\to+\infty$.
Hence from Equations~\eqref{eq:mart2} and~\eqref{mart3}, we get
$$
\sum_{n=1}^\infty \max_{1\leq l\leq C(n)}  \max_{1\leq k\leq C(n-1)}
\max_{j\in J(n,l,k)}
\lvert \exp(-2i\pi j\alpha)-1\rvert^2<\infty\ .
$$
We use this bound with two consecutive elements of the same set 
$J(n,l,k)$ and get the announced result.
\end{proof}

The following sufficient condition for weak  mixing follows from 
Proposition~\ref{prop:necessaire}.
A similar condition appears in \cite{FHZ}.  

\begin{coro}
\label{FHZ}
For every $n\in \NN$, let 
$$
K_n=\inf\bigl\{  |h_i(n) - h_j(n)|: 1\leq i,j\leq C(n),\
h_i(n) \neq h_j(n)\bigr\}
$$
and let $K=\varliminf_n K_n$.
If $K$ is finite, then $(X,T,\mu)$ has at most $K$ eigenvalues.
In particular, if $K=1$ then  this system is weakly mixing.
\end{coro}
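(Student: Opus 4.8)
The plan is to deduce from Proposition~\ref{prop:necessaire} that every eigenvalue of $(X,T,\mu)$ is a $K$-th root of unity; the bound on the number of eigenvalues and the weak mixing statement then follow at once. First I would note that if $\lambda\in\CC$ is an eigenvalue, the convergence of the series in Proposition~\ref{prop:necessaire} forces its general term to vanish, so that
$$
\max_{1\leq k\leq C(n)}\bigl|\lambda^{h_{k}(n)}-1\bigr|\longrightarrow 0
\quad\text{as } n\to\infty ;
$$
equivalently $\lambda^{h_{k}(n)}\to 1$ as $n\to\infty$, uniformly in $k$.

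Next I would exploit the hypothesis $K=\varliminf_n K_n<\infty$. Each $K_n$ is either a positive integer or $+\infty$ (the latter exactly when all the heights $h_k(n)$ coincide), so the finite value $K$ of the $\varliminf$ is attained for infinitely many $n$; along such a subsequence $(n_m)$ I may choose indices $i_m,j_m$ with $\bigl|h_{i_m}(n_m)-h_{j_m}(n_m)\bigr|=K$. The crux of the argument is then the following: since $\lambda$ has modulus $1$, the uniform convergence above gives both $\lambda^{h_{i_m}(n_m)}\to 1$ and $\lambda^{h_{j_m}(n_m)}\to 1$, whence (using $\lambda^{-1}=\overline\lambda$ to treat either sign of the difference)
$$
\lambda^{\,\lvert h_{i_m}(n_m)-h_{j_m}(n_m)\rvert}\longrightarrow 1
\quad\text{as } m\to\infty .
$$
But the exponent equals $K$ for every $m$, so the left-hand side is the constant $\lambda^{K}$; passing to the limit yields $\lambda^{K}=1$.

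Thus every eigenvalue lies among the $K$-th roots of unity, of which there are exactly $K$, proving that $(X,T,\mu)$ has at most $K$ eigenvalues. When $K=1$ the only eigenvalue is $\lambda=1$, whose eigenfunctions are the constants by ergodicity, so the system has no non-constant eigenfunction and is weakly mixing. The argument is short because it rests entirely on Proposition~\ref{prop:necessaire}; the only points requiring care are the passage from the finiteness of $\varliminf_n K_n$ to the existence of a subsequence with $K_{n_m}=K$ (using that the $K_n$ take values in $\NN\cup\{+\infty\}$) and the handling of the sign of $h_{i_m}(n_m)-h_{j_m}(n_m)$, both of which are routine.
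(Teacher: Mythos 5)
Your proof is correct and follows exactly the route the paper indicates (the corollary is stated as a direct consequence of Proposition~\ref{prop:necessaire}, with no further proof given): the summability forces $\max_k|\lambda^{h_k(n)}-1|\to 0$, the integer-valued sequence $(K_n)$ attains its finite $\varliminf$ $K$ infinitely often, and taking the quotient of two height powers along that subsequence yields $\lambda^K=1$, so all eigenvalues are $K$-th roots of unity. The two points you flag as needing care are handled properly, and the weak-mixing conclusion for $K=1$ correctly uses ergodicity (which holds by unique ergodicity).
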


Now we restate Proposition~\ref{prop:condcont} and 
Proposition~\ref{prop:necessaire}
in terms of matrices. 

\medskip

{\bf Notation.}
For every real number $x$ we write $\vvert x\vvert$ for the distance of $x$ 
to the nearest integer.
For a vector $V=(v_1,\dots,v_m)\in \RR^m$, we write 
$$
\Vert V\Vert=\max_{1\leq j\leq m}|v_j|\text{ and }
\vvert V\vvert=\max_{1\leq j\leq m}\vvert v_j\vvert\ .
$$
We use similar notations for real matrices.
With these notations, the two preceding Theorems can be written as 
follows.

\begin{theo}
\label{th:condsuffmat}
Let $\alpha \in \RR$ and $\lambda=\exp(2i\pi \alpha)$. 
\begin{enumerate}
\item 
If $\lambda$ is an eigenvalue of $(X,T,\mu)$ then 
$\displaystyle \sum_{n\ge 1} \vvert \alpha P(n)  H(1) \vvert^2 
<\infty$. 
\item
If 
$\displaystyle \sum_{n\ge 1}\vvert  \alpha P(n)  H(1) \vvert <\infty$ 
then $\lambda$ is a continuous eigenvalue of 
$(X,T,\mu)$.
\end{enumerate}
\end{theo}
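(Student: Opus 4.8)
The plan is to simply translate the quantity $\max_{1\leq k\leq C(n)}\lvert\lambda^{h_k(n)}-1\rvert$ appearing in Propositions~\ref{prop:condcont} and~\ref{prop:necessaire} into the matrix language of the statement. First I would observe that, since $\lambda=\exp(2i\pi\alpha)$, we have $\lambda^{h_k(n)}=\exp(2i\pi\alpha h_k(n))$, so $\lvert\lambda^{h_k(n)}-1\rvert=\lvert\exp(2i\pi\alpha h_k(n))-1\rvert$ depends only on the fractional part of $\alpha h_k(n)$; more precisely it equals $2\lvert\sin(\pi\alpha h_k(n))\rvert$. The key elementary fact I would record is that there are two-sided bounds relating $\lvert\exp(2i\pi t)-1\rvert$ and $\vvert t\vvert$: namely, there exist absolute constants $c_1,c_2>0$ with $c_1\vvert t\vvert\leq\lvert\exp(2i\pi t)-1\rvert\leq c_2\vvert t\vvert$ for all real $t$ (one may take $c_2=2\pi$ and $c_1=4$, using $\lvert\sin(\pi t)\rvert$ as an intermediate quantity and the concavity of sine on $[0,\pi/2]$). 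These inequalities let me pass freely between the trigonometric expressions of the earlier propositions and the distance-to-nearest-integer notation $\vvert\cdot\vvert$, up to harmless multiplicative constants that do not affect convergence of the series.

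Next I would unpack the matrix notation. By definition $H(n)=P(n)H(1)$ is the column vector whose $k$-th entry is $h_k(n)$, so $\alpha P(n)H(1)=\alpha H(n)$ is the vector with entries $\alpha h_k(n)$ for $1\leq k\leq C(n)$. By the convention introduced just before the theorem,
\begin{equation*}
\vvert\alpha P(n)H(1)\vvert=\vvert\alpha H(n)\vvert=\max_{1\leq k\leq C(n)}\vvert\alpha h_k(n)\vvert\ .
\end{equation*}
Combining this with the elementary bounds above, the scalar quantity $\max_{1\leq k\leq C(n)}\lvert\lambda^{h_k(n)}-1\rvert$ is comparable, term by term and uniformly in $n$, to $\vvert\alpha P(n)H(1)\vvert$. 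Hence the series $\sum_n\max_k\lvert\lambda^{h_k(n)}-1\rvert$ converges if and only if $\sum_n\vvert\alpha P(n)H(1)\vvert$ converges, and likewise for the squared versions, since squaring preserves the comparability of the general terms.

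With this dictionary in hand the two assertions follow immediately. For part~(1), if $\lambda$ is an eigenvalue, Proposition~\ref{prop:necessaire} gives $\sum_n\max_k\lvert\lambda^{h_k(n)}-1\rvert^2<\infty$, and the lower comparison bound $\vvert\alpha h_k(n)\vvert\leq c_1^{-1}\lvert\lambda^{h_k(n)}-1\rvert$ yields $\sum_n\vvert\alpha P(n)H(1)\vvert^2<\infty$. For part~(2), if $\sum_n\vvert\alpha P(n)H(1)\vvert<\infty$, the upper comparison bound $\lvert\lambda^{h_k(n)}-1\rvert\leq c_2\vvert\alpha h_k(n)\vvert$ gives $\sum_n\max_k\lvert\lambda^{h_k(n)}-1\rvert<\infty$, and Proposition~\ref{prop:condcont} then furnishes a continuous eigenfunction. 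There is essentially no obstacle here: the only point requiring a line of care is establishing the two-sided comparison between $\lvert\exp(2i\pi t)-1\rvert$ and $\vvert t\vvert$ with constants uniform in $t$, and even that is a standard estimate. The entire proposition is therefore a purely notational restatement, which is presumably why the authors present it as a reformulation rather than a new result.
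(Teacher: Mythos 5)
Your proposal is correct and matches the paper's approach: the paper presents Theorem~\ref{th:condsuffmat} as a direct restatement of Propositions~\ref{prop:condcont} and~\ref{prop:necessaire} in matrix notation, which is exactly the translation you carry out (with the standard two-sided comparison $4\vvert t\vvert\leq\lvert e^{2i\pi t}-1\rvert\leq 2\pi\vvert t\vvert$ correctly supplied).
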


\begin{prop}
\label{description}
Let $\alpha\in\RR$ and $\lambda = \exp (2i\pi \alpha )$.  

If $\lambda$ is an eigenvalue of $(X,T,\mu)$ then it satisfies at 
least one of the two following properties:
\begin{enumerate}
\item
$\alpha$ is rational, with a denominator dividing
$\gcd(h_i(m): 1\leq i\leq C(m))$ for some $m \in \NN$.
In this case $\lambda$ is a continuous eigenvalue.
\item
There exist $m \in \NN$ and integers $w_j$, $1\leq j\leq C(m)$, such that
$
\displaystyle\alpha = \sum_{j=1}^{ C(m)} w_j \mu (B_j(m)) .
$
\end{enumerate}
Moreover, if $\alpha $ is rational, with a denominator dividing
$\gcd(h_i(m): 1\leq i\leq C(m))$ for some $m \in \NN$, then $\lambda $ is
 an eigenvalue of $(X,T,\mu )$. 
\end{prop}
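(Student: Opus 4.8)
The plan is to extract everything from the necessary condition of Theorem~\ref{th:condsuffmat}(1). Since $P(n)H(1)=H(n)$, that condition reads $\sum_{n\ge 1}u_n^2<\infty$ where $u_n:=\vvert\alpha H(n)\vvert=\max_{1\le k\le C(n)}\vvert\alpha h_k(n)\vvert$; in particular $u_n\to 0$. I would then split the argument according to whether $\alpha$ is rational, obtaining conclusion~(1) in the first case and conclusion~(2) in the second.

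Suppose first $\alpha=p/q$ with $\gcd(p,q)=1$. Each $\vvert\alpha h_k(n)\vvert$ is the distance to an integer of a rational number with denominator dividing $q$, so it lies in the finite set $\{0,1/q,\dots\}$ and is either $0$ or at least $1/q$. Because $u_n\to 0$, for all large $n$ we must have $u_n=0$, i.e.\ $q\mid h_k(n)$ for every $k$; fixing one such $n=m$ gives $q\mid\gcd(h_i(m):1\le i\le C(m))$, which is conclusion~(1). Continuity follows at once, since the tail of $\sum_n u_n$ vanishes, so $\sum_n u_n<\infty$ and Theorem~\ref{th:condsuffmat}(2) applies. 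The same computation yields the final (converse) assertion: if $q\mid\gcd(h_i(m):1\le i\le C(m))$ then $q$ divides every $h_k(m)$, and since $H(n)=P(n,m)H(m)$ with $P(n,m)$ integral, $q$ divides every $h_l(n)$ for $n\ge m$; hence $u_n=0$ for $n\ge m$, $\sum_n u_n<\infty$, and $\lambda$ is a continuous eigenvalue.

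For irrational $\alpha$ I would produce conclusion~(2) through the recursion $H(n)=M(n)H(n-1)$. Let $a_{n,k}\in\ZZ$ be the nearest integer to $\alpha h_k(n)$ and $\epsilon_{n,k}=\alpha h_k(n)-a_{n,k}$, so that $\max_k|\epsilon_{n,k}|=u_n\to 0$; write $a(n),\epsilon(n)$ for the corresponding vectors. Multiplying the recursion by $\alpha$ gives $a(n)-M(n)a(n-1)=M(n)\epsilon(n-1)-\epsilon(n)$. The left-hand side is an integer vector, while the right-hand side is small: as the entries $m_{l,k}(n)$ are uniformly bounded and $C(n)\le L$ (Lemma~\ref{cnborne} together with the finiteness of $\{M(n):n\ge 1\}$), the vector $M(n)\epsilon(n-1)$ has sup-norm at most $KL\,u_{n-1}$, so the right-hand side tends to $0$. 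Hence there is $m$ with $a(n)=M(n)a(n-1)$ for all $n>m$, i.e.\ $a(n)=P(n,m)a(m)$, that is $a_{n,l}=\sum_k P_{l,k}(n,m)\,a_{m,k}$. Dividing by $h_l(n)$ and using $\alpha h_l(n)=a_{n,l}+\epsilon_{n,l}$ gives $\alpha-\epsilon_{n,l}/h_l(n)=\sum_k \bigl(P_{l,k}(n,m)/h_l(n)\bigr)a_{m,k}$; letting $n\to\infty$ the error term dies, since $\epsilon_{n,l}\to 0$ and $h_l(n)\to\infty$ (heights grow because $C(n)\le L$ while the partitions span the topology of the infinite Cantor space $X$), whereas $P_{l,k}(n,m)/h_l(n)\to\mu(B_k(m))$ by Equation~\eqref{B3}. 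This yields $\alpha=\sum_{k=1}^{C(m)}a_{m,k}\,\mu(B_k(m))$ with integer coefficients $w_k=a_{m,k}$, which is conclusion~(2).

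The one genuinely delicate point is the passage from ``$a(n)-M(n)a(n-1)\to 0$'' to ``$a(n)=M(n)a(n-1)$ eventually''. This is exactly where linear recurrence enters, through the uniform bound on the entries $m_{l,k}(n)$ and on $C(n)$: without it the product $M(n)\epsilon(n-1)$ could fail to be small and the integer vectors $a(n)$ would not be forced to stabilize into the matrix recursion, so I expect this to be the main obstacle to watch.
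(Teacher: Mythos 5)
Your proof is correct and follows essentially the same route as the paper: your stabilization argument for the integer parts $a(n)$ (forcing $a(n)=M(n)a(n-1)$ eventually because an integer vector tending to $0$ is eventually $0$) is precisely the paper's Lemma~\ref{tendto0} reproved inline, and the identification $\alpha=\sum_k a_{m,k}\mu(B_k(m))$ via Equation~\eqref{B3} is the same computation the paper performs with the dual identity $\mu(B_k(m))=\sum_l P_{l,k}(n,m)\mu(B_l(n))$. The only differences are cosmetic --- you split cases by rationality of $\alpha$ rather than by vanishing of the error vector, and you write out the converse ``Moreover'' part, which the paper leaves to the reader.
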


The proof of Proposition~\ref{description} needs the following lemma.

\begin{lemma}
\label{tendto0}
Let $u$ be a real vector such that $\vvert P(n)u\vvert\to 0$ as 
$n\to+\infty$.
Then there exist $m \in \NN$, an integer vector $w$ and a real vector $v$ 
with
$$
P(m)u=w+v\text{ and } \Vert P(n,m)v\Vert\to 0\text{ as 
}n\to+\infty\ .
$$
\end{lemma}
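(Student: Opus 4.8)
The plan is to follow the integer and fractional parts of the vectors $P(n)u$ separately, and to show that once $n$ is large the integer parts obey the exact recursion coming from the matrices $M(n)$, after which the fractional parts are transported exactly by the products $P(n,m)$. For each $n$ I would let $W(n)$ be the integer vector whose entries are the nearest integers to those of $P(n)u$ (fixing an arbitrary rule for half-integers), and put $R(n)=P(n)u-W(n)$, so that by the very definition of $\vvert\cdot\vvert$ one has $\Vert R(n)\Vert=\vvert P(n)u\vvert\to 0$. Since $P(n+1)=M(n+1)P(n)$, applying $M(n+1)$ to $P(n)u=W(n)+R(n)$ and comparing with $P(n+1)u=W(n+1)+R(n+1)$ gives
$$W(n+1)-M(n+1)W(n)=M(n+1)R(n)-R(n+1),$$
whose left-hand side is an integer vector.

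Next I would bound the right-hand side. By Lemma~\ref{cnborne} and (LR) the set $\{M(n);n\ge 1\}$ is finite, so there is a constant $B$ with $\Vert M(n+1)R(n)\Vert\le B\Vert R(n)\Vert$ for all $n$; hence $M(n+1)R(n)-R(n+1)\to 0$ in norm. An integer vector of norm strictly less than $1$ vanishes, so there is $N$ such that $W(n+1)-M(n+1)W(n)=0$ for every $n\ge N$. By induction this yields $W(n)=P(n,N)W(N)$ for all $n\ge N$.

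Finally I would set $m=N$, $w=W(N)$ (an integer vector) and $v=R(N)=P(m)u-w$ (a real vector), which gives the decomposition $P(m)u=w+v$. For $n\ge N$, subtracting $W(n)=P(n,N)W(N)$ from the identity $P(n)u=P(n,N)P(N)u=P(n,N)(W(N)+R(N))$ yields $R(n)=P(n,N)R(N)=P(n,m)v$, whence $\Vert P(n,m)v\Vert=\Vert R(n)\Vert\to 0$, as required.

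The delicate point is precisely the norm estimate on $M(n+1)R(n)$: without a uniform bound on the matrices, an integer vector with small but nonzero norm could persist for all $n$, and the passage to an exact recursion for $W(n)$ would break down. The linear recurrence hypothesis, via the finiteness of $\{M(n)\}$ furnished by Lemma~\ref{cnborne}, is exactly what makes the convergence $\Vert R(n)\Vert\to 0$ strong enough to force $M(n+1)R(n)\to 0$.
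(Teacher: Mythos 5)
Your proof is correct and follows essentially the same route as the paper: decompose $P(n)u$ into an integer part and a small part, use the finiteness of $\{M(n);n\ge 1\}$ to show the integer vector $W(n+1)-M(n+1)W(n)=M(n+1)R(n)-R(n+1)$ tends to $0$ and hence vanishes eventually, and then transport the residual exactly by $P(n,m)$. The only difference is cosmetic (you fix the nearest-integer rule explicitly, which the paper leaves implicit).
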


\begin{proof}
By hypothesis, for every $n \in \NN$ we can write $P(n)u=v_n+w_n$, where 
$w_n$ is an integer vector and $v_n$ a real vector with 
$\Vert v_n\Vert\to 0$ as $n\to+\infty$.
Since all the matrices $M(m)$ belong to a finite family, 
$\Vert M(m)v_m-v_{m+1}\Vert$ converges to $0$ as $m$ goes to 
infinity. 
But for every $m \in \NN$ we have $P(m+1)u=M(m+1)P(m)u$, thus 
$$
M(m)v_m-v_{m+1}=-M(m)w_m+w_{m+1}
$$
and $M(m)v_m-v_{m+1}$ is an integer vector.
Therefore the sequence $(M(m)v_m-v_{m+1})$ is eventually zero.
There exists $m \in \NN$ such that $v_n=P(n,m)v_m$ for every $n>m$.
The vectors $v=v_m$ and $w=w_m$ satisfy the announced properties.
\end{proof}

\begin{proof}[Proof of Proposition~\ref{description}]
Let $u=\alpha H(1)$. 
Since $\lambda$ is an eigenvalue, 
$\vvert P(n) u\vvert\to 0$ as $n  \to \infty$ by Theorem~\ref{th:condsuffmat}. 
Let $m,v$ and $w$ be as in Lemma~\ref{tendto0}.
We recall that $P(m)u=\alpha H(m)$. 
We distinguish two cases. 

First we assume that $v=0$. Then $\alpha H(m)$ is equal to the 
integer 
vector $w$, and $\alpha$ is rational with a denominator dividing
$\gcd(h_i(m): 1\leq i\leq C(m))$. 
For $n\geq m$ the vector $\alpha H(m)$ has integer entries, thus
$\vvert\alpha H(m)\vvert=0$ and $\lambda$ is a continuous eigenvalue 
by Theorem~\ref{th:condsuffmat}.

Now suppose $v\not = 0$. For $n>m$ we have
\begin{align*}
\sum_{k=1}^{C(m)}\mu(B_k(m))v_k
&=\sum_{k=1}^{C(m)}\sum_{l=1}^{C(n)}P_{l,k}(n,m)
\,\mu(B_l(n))v_k
& \text{ by (2.3)}\\
&=\sum_{l=1}^{C(n)}(P(n,m)v)_l\, \mu(B_l(n))
\leq \Vert P(n,m)v\Vert
\end{align*}

and the last term converges to $0$ as $n\to+\infty$, thus 
$$
\sum_{k=1}^{C(m)}\mu(B_k(m))v_k=0\ .
$$
As $w=\alpha H(m)-v$, that is, $w_j=\alpha h_j(m)-v_j$ for $1\leq 
j\leq C(m)$, we get
$$
\sum_{j=1}^{C(m)}w_j\mu(B_j(m))
=\alpha\sum_{j=1}^{C(m)}h_j(m)\mu(B_j(m))=\alpha\ .
$$
The rest of the proof is left to the reader.
\end{proof}

\section{Examples}

\label{sectionexample}

We study some examples where we can explicitly say 
that the eigenfunctions are continuous or there do not exist 
non trivial eigenvalues. 
We keep the notations and 
hypotheses of the preceding section.

\subsection{Example 1: The sequence $(M(n) ; n\geq 2)$ is ultimately constant}

Let $(M(n) ; n\geq 1)$ be the sequence of matrices associated to 
the linearly recurrent system $(X,T,\mu)$. 
We say that $(X,T,\mu)$ has  \emph{a stationary sequence of matrices} 
if there exist a square matrix $M$ and an integer $n_0\in \NN$ 
such that $M(n) = M$ for all $n\geq n_0$. 
Without loss we can assume that $n_0 = 2$. 
We have $P_n = M^{n-1}$ for $n\geq 2$.

Substitution subshifts and odometers with constant base belong to 
 the family of linearly recurrent systems with a stationary 
sequence of matrices (see \cite{DHS}).
The following lemma was used in \cite{Ho} to prove that eigenvalues 
of substitution subshifts are continuous.

\begin{lemma}
\label{statexp}
Let $M$ be a matrix with integer entries.
If $u$ is  a real vector such that 
$\lVert M^n u\rVert\to 0$ when $n\to\infty$, then
the convergence is exponential, i.e., there exist $ 0\leq r <1  $ and 
a constant $ K $  such that
$\lVert M^n u\rVert \leq K r^{n} $ for all $n\in \NN $.
\end{lemma}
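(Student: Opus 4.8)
The plan is to base everything on the spectral decomposition of $M$. Write $M$ as a $d\times d$ matrix acting on $\RR^{d}$. First I would split $\RR^{d}=E^{s}\oplus E^{cu}$ into $M$-invariant real subspaces, where $E^{s}$ is the sum of the generalized eigenspaces of $M$ associated with eigenvalues of modulus $<1$, and $E^{cu}$ the sum of those associated with eigenvalues of modulus $\ge 1$. These are genuine real subspaces because the characteristic polynomial of $M$ has real coefficients, so each of the two sets of eigenvalues is stable under complex conjugation; hence the splitting is defined over $\RR$ and both summands are $M$-invariant. (The integer hypothesis on $M$ is not actually needed here, only that $M$ is real; the integrality matters for the applications, not for this lemma.)

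Next I would decompose $u=u^{s}+u^{cu}$ accordingly. Since $M$ preserves the splitting, $M^{n}u=M^{n}u^{s}+M^{n}u^{cu}$ with the two terms lying in $E^{s}$ and $E^{cu}$ respectively; as these subspaces are in direct sum and $M^{n}u\to 0$, each component tends to $0$, and in particular $M^{n}u^{cu}\to 0$ inside $E^{cu}$. The heart of the proof is to deduce $u^{cu}=0$. For this I would restrict to $N=M|_{E^{cu}}$, all of whose eigenvalues have modulus $\ge 1$, and consider the real cyclic subspace generated by $w=u^{cu}$, spanned by $w,Nw,\dots,N^{m-1}w$. On it $N$ satisfies a monic recurrence $N^{m}w=\sum_{i=0}^{m-1}c_{i}N^{i}w$ with real coefficients whose characteristic polynomial $q(x)=x^{m}-\sum_{i}c_{i}x^{i}$ has all its roots among the eigenvalues of $N$, hence of modulus $\ge 1$. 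Each coordinate of the vector sequence $(N^{n}w)_{n}$ then solves the scalar linear recurrence attached to $q$, so it is a combination of terms $n^{j}\lambda^{n}$ with $|\lambda|\ge 1$; such a sequence cannot tend to $0$ unless it is identically zero. Since $N^{n}w\to 0$ forces every coordinate to vanish for all $n$, we get $w=N^{0}w=0$, that is $u^{cu}=0$ and $u=u^{s}\in E^{s}$.

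Finally, the restriction $M|_{E^{s}}$ has spectral radius $\rho<1$. By the spectral radius formula $\lVert (M|_{E^{s}})^{n}\rVert^{1/n}\to\rho$, so fixing any $r$ with $\rho<r<1$ there is an index from which $\lVert (M|_{E^{s}})^{n}\rVert\le r^{n}$; hence $\lVert M^{n}u\rVert\le \lVert u^{s}\rVert\, r^{n}$ for all large $n$, and enlarging the constant to absorb $\lVert u^{s}\rVert$ together with the finitely many remaining indices yields $\lVert M^{n}u\rVert\le Kr^{n}$ for every $n\in\NN$, as required. I expect the only real obstacle to be the middle step, namely ruling out a center–unstable component of $u$; the cleanest route I know is the linear–recurrence argument above, and the single point that should be stated carefully is the standard fact that a solution of a constant–coefficient linear recurrence all of whose characteristic roots have modulus $\ge 1$ tends to $0$ only if it vanishes identically.
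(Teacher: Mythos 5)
Your proof is correct. Note that the paper itself does not prove this lemma at all: it simply quotes it from the reference [Ho], so there is no in-paper argument to compare against. Your argument is the standard one (and essentially the one used in [Ho]): split $\RR^d=E^s\oplus E^{cu}$ along the spectrum, kill the centre--unstable component, and apply the spectral radius formula on $E^s$. All three steps are sound, including your observation that integrality of $M$ is irrelevant here. The one point you rightly flag as needing care --- that an exponential polynomial $f(n)=\sum_i p_i(n)\lambda_i^n$ with all $|\lambda_i|\ge 1$ tends to $0$ only if it vanishes identically --- is indeed standard; if you want it self-contained, isolate the dominant modulus $R$ and dominant degree $D$, divide by $n^DR^n\ge 1$, and use that $\frac 1N\sum_{n\le N}\bigl|\sum_i c_i\omega_i^n\bigr|^2\to\sum_i|c_i|^2$ for distinct unimodular $\omega_i$, which forces a positive $\limsup$ of $|f(n)|$ unless all leading coefficients vanish.
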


From this Lemma and Theorem~\ref{th:condsuffmat} we get:

\begin{prop}
Let $(X,T,\mu)$ be a linearly recurrent Cantor system with a 
stationary sequence of matrices.
Then every eigenfunction of this system is continuous.

 Moreover all the linearly recurrent Cantor systems  with the same 
stationary sequence of matrices have the same 
eigenvalues.
\end{prop}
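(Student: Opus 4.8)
The plan is to deduce both assertions from Theorem~\ref{th:condsuffmat} together with the exponential decay provided by Lemma~\ref{statexp}. For the first statement, suppose $\lambda=\exp(2i\pi\alpha)$ is an eigenvalue of the system. By part~(1) of Theorem~\ref{th:condsuffmat} we have $\sum_{n\ge 1}\vvert\alpha P(n)H(1)\vvert^2<\infty$, and in particular $\vvert\alpha P(n)H(1)\vvert\to 0$. Since the sequence of matrices is stationary, $P(n)=M^{n-1}$ for $n\ge 2$, so writing $u=\alpha M\,H(1)$ we have $\vvert M^{n}u\vvert\to 0$ as $n\to\infty$. The first thing I would do is upgrade this ``distance to the nearest integer'' statement to an honest vector converging to zero: for each large $n$ choose the nearest integer vector to $M^n u$, subtract it, and using that the matrices (here a single matrix $M$) form a finite family and have integer entries, conclude as in Lemma~\ref{tendto0} that there is a real vector $v$ and an integer $m$ with $M^m u=w+v$ ($w$ integral) and $\Vert M^n v\Vert\to 0$.

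Once $v$ is isolated, the point is that $v$ lies in the contracting part of $M$, so Lemma~\ref{statexp} applies directly: the convergence $\Vert M^n v\Vert\to 0$ is in fact exponential, $\Vert M^n v\Vert\le K r^n$ for some $0\le r<1$. Because $M^m u=w+v$ with $w$ an integer vector and $M$ integral, for $n\ge m$ the vector $M^{n}u$ differs from $M^{n-m}v$ by an integer vector, so $\vvert\alpha P(n)H(1)\vvert=\vvert M^{n-1}u\vvert\le\Vert M^{n-1-m}v\Vert\le K r^{n-1-m}$. Summing this geometric bound gives $\sum_{n\ge 1}\vvert\alpha P(n)H(1)\vvert<\infty$, which is exactly the hypothesis of part~(2) of Theorem~\ref{th:condsuffmat}. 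Hence $\lambda$ is a continuous eigenvalue, proving that every eigenfunction is continuous.

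For the moreover part, the key observation is that the criterion furnished by Theorem~\ref{th:condsuffmat} depends on the system only through the matrices $M(n)$ and the initial height vector $H(1)$. I would argue that for a stationary sequence the relevant data is essentially $M$ alone. Since $H(n)=M^{n-1}H(1)$ and, under (KR5)-(KR6), the first tower's structure forces the entries of $H(1)$ to be determined by $M$ (indeed $H(1)=M\,H(0)$ with $H(0)$ the all-ones-type vector coming from the trivial partition $\P(0)$), two linearly recurrent systems sharing the same stationary sequence share the same $H(1)$ as well. Consequently the summability conditions $\sum_n\vvert\alpha P(n)H(1)\vvert^{(2)}<\infty$ are identical for the two systems, so by the equivalence just established (eigenvalue $\Leftrightarrow$ continuous eigenvalue $\Leftrightarrow$ the summable condition) the two systems have the same set of eigenvalues.

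The main obstacle I anticipate is the first step: passing rigorously from $\vvert M^n u\vvert\to 0$ to an additive splitting $M^m u=w+v$ with $M^n v\to 0$, so that Lemma~\ref{statexp} can be invoked. This is precisely the content of Lemma~\ref{tendto0}, and the delicate point there is that the integer ``rounding'' vectors must stabilize under multiplication by $M$; this uses that $M$ has integer entries and that $\Vert M v_n-v_{n+1}\Vert\to 0$ forces the integer discrepancies to be eventually zero. A secondary subtlety in the moreover part is verifying that the two systems genuinely share $H(1)$ and not merely the matrices; I would handle this by tracing through the normalization $\P(0)$ trivial, $C(0)=1$, $h_1(0)=1$, which pins down $H(1)=M(1)H(0)=M H(0)$ unambiguously from the stationary matrix.
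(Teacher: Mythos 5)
Your proposal is correct and follows exactly the route the paper intends: the paper gives no written proof beyond ``From this Lemma and Theorem~\ref{th:condsuffmat} we get,'' and your chain (Theorem~\ref{th:condsuffmat}(1) $\Rightarrow$ $\vvert \alpha P(n)H(1)\vvert\to 0$, then the splitting of Lemma~\ref{tendto0}, then Lemma~\ref{statexp} for exponential decay, then Theorem~\ref{th:condsuffmat}(2)) is precisely the intended argument, including the correct identification of Lemma~\ref{tendto0} as the bridge from the ``nearest integer'' statement to a genuine vector tending to zero. The only blemish is a harmless off-by-one in the exponents ($\alpha P(n)H(1)=M^{n-2}u$ with your choice $u=\alpha M H(1)$), and your observation that $H(1)=M(1)H(0)$ is pinned down by the matrices settles the ``moreover'' part correctly.
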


\subsection{Example 2: A family of weakly mixing systems} 

We build a family of linearly recurrent systems which are the 
Cantor 
analogues of interval exchange transformations considered in 
\cite{FHZ} (Theorem 2.2). 
Let $N$ be a positive integer. Let  $(M(n) ; n \geq 2)$ be a 
sequence of matrices in the  family
$$\left\{
\left[ 
\begin{array}{ccc}
l   & k-1 & 1\\
l-1 & k   & 1\\
l-1 & k-1 & 1
\end{array}
\right] 
,
\
\left[ 
\begin{array}{ccc}
l-1 & k   & 1\\
l   & k-1 & 1\\
l   & k   & 1
\end{array}
\right] 
\ :\ 
 1\leq l,k \leq N\right\}
$$
and let $(X,T)$ be a linearly recurrent system with this sequence of 
matrices.
For any  $n$ and any $v = (v_1 , v_2 , v_3 )^t$, the vector $u=M(n)v$ 
satisfies $|u_1 - u_2| = |v_1 - v_2|$.

Consequently if we suppose $|h_{1}(1) - h_{2}(1)| = 1$ 
then it follows by induction that for all $n\geq 1$ we have $|h_{1}(n) 
- h_{2}(n)| = 1$. 
By Corollary~\ref{FHZ} the system $(X,T,\mu)$ does not have non trivial
eigenvalues, i.e. it is weakly mixing.

\subsection{Example 3: The sequence $(M(n) ; n\geq 1 )$ has infinitely 
many rank 1 matrices}

\begin{prop}
Let $(X,T,\mu)$ be a linearly recurrent Cantor system and let the associated sequence of matrices be $(M(n);n \ge 1)$.
Suppose that $M(n)$ has rank one for infinitely many values of $n$.
Then $\lambda = \exp (2i\pi \alpha )$ is an eigenvalue of $(X,T,\mu)$ if and only 
if $\alpha$ is rational with a denominator equal to  
$\gcd( h_i(m):1\leq i\leq C(m))$ for some $m \in \NN$. 
Moreover every eigenfunction is continuous.
\end{prop}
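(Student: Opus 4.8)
The plan is to reduce everything to the rationality of $\alpha$ together with the two criteria of Theorem~\ref{th:condsuffmat}, using the rank one hypothesis only to exclude the ``irrational'' alternative of Proposition~\ref{description}. Set $u=\alpha H(1)$, so that $P(n)u=\alpha H(n)$ for every $n$. For the \emph{only if} direction, assume $\lambda$ is an eigenvalue. By Theorem~\ref{th:condsuffmat}(1) we have $\vvert P(n)u\vvert\to 0$, so Lemma~\ref{tendto0} supplies an $m\in\NN$, an integer vector $w$ and a real vector $v$ with $\alpha H(m)=P(m)u=w+v$ and $\Vert P(n,m)v\Vert\to 0$.

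The crux is to exploit a rank one matrix $M(n_1)$ with $n_1>m$, of which there are infinitely many by hypothesis. Since $M(n_1)$ has rank one and $H(n_1)=M(n_1)H(n_1-1)$ is a nonzero vector in its image, that image is exactly the line $\RR\,H(n_1)$. Hence $M(n_1)\bigl(P(n_1-1,m)v\bigr)=\beta H(n_1)$ for some scalar $\beta$, and applying $P(n,n_1)$ and using $P(n,n_1)H(n_1)=H(n)$ gives
$$
P(n,m)v=\beta\,H(n)\qquad\text{for all }n\geq n_1 .
$$
Because every height is a positive integer we have $\Vert H(n)\Vert\geq 1$, so $\Vert P(n,m)v\Vert=|\beta|\,\Vert H(n)\Vert\to 0$ forces $\beta=0$; in particular $P(n_1,m)v=0$. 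Multiplying $\alpha H(m)=w+v$ by $P(n_1,m)$ and using $P(n_1,m)H(m)=H(n_1)$ yields
$$
\alpha H(n_1)=P(n_1,m)w\in\ZZ^{C(n_1)} .
$$
Thus $\alpha h_i(n_1)\in\ZZ$ for every $i$, i.e. $\alpha$ is rational with denominator dividing $\gcd(h_i(n_1):1\leq i\leq C(n_1))$. This is precisely case~(1) of Proposition~\ref{description}, and I would reconcile it with the exact form of the denominator in the statement by bookkeeping with the divisibility chain $\gcd(h_i(m-1))\mid\gcd(h_i(m))$ (which follows from $H(m)=M(m)H(m-1)$), choosing the appropriate level $m$.

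For the \emph{if} direction, suppose $\alpha=p/q$ is rational with $q$ dividing $\gcd(h_i(m):1\leq i\leq C(m))$ for some $m$. Then $\alpha H(m)\in\ZZ^{C(m)}$, and since $\alpha H(n)=P(n,m)\alpha H(m)$ is an integer vector for every $n\geq m$, we get $\vvert\alpha P(n)H(1)\vvert=\vvert\alpha H(n)\vvert=0$ for all $n\geq m$. In particular the series of Theorem~\ref{th:condsuffmat}(2) converges (it is eventually zero), so $\lambda$ is a continuous eigenvalue. Continuity of \emph{every} eigenfunction is then automatic: by the only if direction every eigenvalue is rational of the above form, and the computation just made shows any such eigenvalue is continuous. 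The main obstacle is the rank one collapse of the middle paragraph — recognising that a single rank one matrix forces $P(n,m)v$ to be a scalar multiple of $H(n)$, which is exactly what upgrades the merely asymptotic bound $\Vert P(n,m)v\Vert\to 0$ into the exact integrality of $\alpha H(n_1)$ and thereby rules out the irrational case~(2) of Proposition~\ref{description}.
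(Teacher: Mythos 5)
Your proof is correct, and the decisive step is handled by a different (and arguably more elementary) mechanism than the paper's. Both arguments begin identically: set $u=\alpha H(1)$, use Theorem~\ref{th:condsuffmat}(1) to get $\vvert P(n)u\vvert\to 0$, invoke Lemma~\ref{tendto0} to write $\alpha H(m)=w+v$ with $w$ integral and $\Vert P(n,m)v\Vert\to 0$, and then aim to show $P(n_1,m)v=0$ for some $n_1>m$ with $M(n_1)$ of rank one, after which $\alpha H(n_1)=P(n_1,m)w\in\ZZ^{C(n_1)}$ and the converse via Theorem~\ref{th:condsuffmat}(2) coincide. Where you diverge is in how rank one kills $v$. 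The paper compares three subspaces of $\RR^{C(m)}$, namely $\ker P(n_1,m)\subset\{x:\Vert P(l,m)x\Vert\to 0\}\subset\{x:\sum_j x_j\mu(B_j(m))=0\}$, observes that the outer two both have codimension one (the first because $P(n_1,m)$ is nonzero of rank $\leq 1$, the last via the measure identity from the proof of Proposition~\ref{description}), concludes all three are equal, and so places $v$ in the kernel. You instead note that the image of $M(n_1)$ is the line $\RR\,H(n_1)$ (one-dimensional and containing $H(n_1)=M(n_1)H(n_1-1)\neq 0$), so $P(n,m)v=\beta H(n)$ for all $n\geq n_1$, and $\Vert H(n)\Vert\geq 1$ forces $\beta=0$. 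Your route needs neither the measure-theoretic identity nor the codimension count; the paper's yields the slightly stronger fact that the three subspaces coincide. Two small caveats: (i) what your argument (and the paper's) actually proves is that the denominator of $\alpha$ \emph{divides} $\gcd(h_i(n_1):1\leq i\leq C(n_1))$; the word ``equal'' in the statement should be read as ``dividing'' (as in Proposition~\ref{description}), and your divisibility-chain bookkeeping cannot upgrade ``divides'' to ``equals'' --- indeed the set of eigenvalues is a group, so it is closed under passing to divisors of the denominator; (ii) the step from ``every eigenvalue admits a continuous eigenfunction'' to ``every eigenfunction is continuous'' tacitly uses ergodicity (one-dimensionality of each eigenspace in $L^2(\mu)$), a point the paper also leaves implicit.
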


\begin{proof}
Let $\lambda = \exp (2i\pi \alpha )$ be an eigenvalue of 
$(X,T,\mu)$. As in the proof of Proposition~\ref{description}, 
we write $u=\alpha H(1)$, and  take  $m,v$ and $w$ 
as given by  Lemma~12.
We choose $n>m$ such that $M(n)$ is of rank $1$.
We have
\begin{multline*}
\ker(P(n,m))
\subset\bigl\{x\in\RR^{C(m)}: 
\Vert P(l,m)x\Vert\to_{l\to \infty} 0\bigr\}\\
\subset \bigl\{x\in\RR^{C(m)}:
\sum_{j=1}^{C(m)}x_j\mu(B_j(m))=0\bigr\}
\end{multline*}
where the last inclusion follows from the proof of 
Proposition~\ref{description}. 
The third of these three spaces is of codimension $1$ in $\RR^{C(m)}$.
The matrix $P(n,m)$ is not zero  and has a rank $\leq 1$, 
thus the first of these three linear spaces is of codimension $1$ 
also, and these spaces are actually equal. Since $v$ belongs to the 
second space, it belongs to the first one, and $P(n,m)v=0$. 
Thus $\alpha H(n)=P(n,m)w$ and it has integer entries.
We conclude as in the proof of Proposition~\ref{description}.
\end{proof}

\subsection{Example 4: $2\times 2$ matrices with determinant equal 
to $\pm 1$}

Here we assume that  the matrices $( M(n) $; $n\geq 2)$ of the 
linearly recurrent system $(X,T,\mu)$ are $2\times 
2$ matrices 
with entries in $\{1, \dots ,L\}$ and determinant $\pm 1$. 
We assume also that $h_1(1)=h_2(1)=1$.
We set 

$$
P_{n}=\left[ 
\begin{array}{cc}
x_{n} & y_{n}\\
z_{n} & w_{n}
\end{array}\right]
\text{ and }\Delta_n=\det(P(n))=\pm 1 \ .
$$
We notice that
$$
x_n+y_n=h_1(n)\text{ and }z_n+w_n=h_2(n)\ .
$$
Since for every $n\ge 2$ all the entries of $M(n)$ are positive, we get 
easily 
that $h_1(n)\geq 2^{n-1}$ and $h_2(n)\geq 2^{n-1}$.

By Equation~\eqref{B3}, as $n\to\infty$, we have
$$
\frac{x_n}{h_1(n)}\to \mu(B_1(1))\ ,\ 
\frac{y_n}{h_1(n)}\to \mu(B_2(1))\ ,\ 
\frac{z_n}{h_2(n)}\to \mu(B_1(1))\ ,\ 
\frac{w_n}{h_2(n)}\to\mu(B_2(1))\ .
$$

\begin{lemma}
\label{lemme18}
For a vector  $ v\in \RR ^2 $ we have 
$\lim _{n\rightarrow \infty }\Vert P(n)v\Vert =0 $ if and only if 
$v$ is collinear with $\bigl(\mu(B_2(1)), -\mu(B_1(1))\bigr)$. 
In this case  the convergence is exponential.
\end{lemma}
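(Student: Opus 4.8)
The plan is to pin down $P(n)v$ \emph{exactly} along the distinguished direction, rather than merely estimate it. Write $a=\mu(B_1(1))$ and $b=\mu(B_2(1))$; by Lemma~\ref{binf} (recall $h_1(1)=h_2(1)=1$) both are positive, and by Equation~\eqref{B2} at $n=1$ we have $a+b=1$. Set $v_0=(b,-a)^t$, so that $v_0$ and $H(1)=(1,1)^t$ are linearly independent, with $\det[\,v_0\mid H(1)\,]=a+b=1$. I would record two exact scalar identities, valid for all $n$. First, iterating the relation $V(n-1)=V(n)M(n)$ from the proof of unique ergodicity gives $V(1)=V(n)P(n)$, that is $(a,b)=\bigl(\mu(B_1(n)),\mu(B_2(n))\bigr)P(n)$; writing $P(n)v_0=(\alpha_n,\beta_n)^t$ and expanding directly yields
\[
\mu(B_1(n))\,\alpha_n+\mu(B_2(n))\,\beta_n=0 .
\]
Second, since $P(n)H(1)=H(n)=(h_1(n),h_2(n))^t$, multiplicativity of the determinant gives $\det[\,P(n)v_0\mid H(n)\,]=\Delta_n\,\det[\,v_0\mid H(1)\,]=\pm1$, i.e.
\[
\alpha_n\,h_2(n)-\beta_n\,h_1(n)=\pm1 .
\]

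For the direct implication I would solve this $2\times2$ linear system in $(\alpha_n,\beta_n)$. Eliminating $\alpha_n$ and using Equation~\eqref{B2} in the form $\mu(B_1(n))h_1(n)+\mu(B_2(n))h_2(n)=1$ collapses the coefficient to $1$, leaving $|\beta_n|=\mu(B_1(n))$ and hence $|\alpha_n|=\mu(B_2(n))$. Thus $\Vert P(n)v_0\Vert=\max\{\mu(B_1(n)),\mu(B_2(n))\}$. Since $\mu(B_k(n))\le 1/h_k(n)$ by Equation~\eqref{B2} and $h_k(n)\ge 2^{n-1}$, this quantity is at most $2^{-(n-1)}$, so $\Vert P(n)v_0\Vert\to 0$ exponentially; by linearity the same holds for every $v$ collinear with $v_0$.

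For the converse I would argue by dimension. The set $\{v\in\RR^2:\Vert P(n)v\Vert\to 0\}$ is a linear subspace; by the previous paragraph it contains the nonzero vector $v_0$, while it cannot contain $H(1)=(1,1)^t$ because $P(n)H(1)=H(n)$ has norm at least $2^{n-1}\to+\infty$. As $v_0$ and $H(1)$ are independent, this subspace is neither $\{0\}$ nor all of $\RR^2$; hence it equals the line $\RR v_0$, which is exactly the asserted collinearity (and the convergence is then exponential by the first part).

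The only genuinely delicate point is the passage from the heuristic $\alpha_n\to 0$ to the exact equalities $|\alpha_n|=\mu(B_2(n))$, $|\beta_n|=\mu(B_1(n))$. The naive expansion $\alpha_n=h_1(n)\bigl(\tfrac{x_n}{h_1(n)}b-\tfrac{y_n}{h_1(n)}a\bigr)$ is an indeterminate $\infty\cdot 0$ and supplies no rate. The trick that makes everything work is to replace it by the two exact relations above, whose combination through Equation~\eqref{B2} removes the indeterminacy entirely and hands back both the precise values and the exponential rate; everything else is routine.
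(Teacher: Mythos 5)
Your proof is correct, and it takes a genuinely different route from the paper's. For the exponential decay, the paper works entry by entry: from the identity $x_{n+1}h_1(n)-x_nh_1(n+1)=-m_{1,2}(n+1)\Delta_n$ (where $|\Delta_n|=1$ enters) it gets $\bigl|\tfrac{x_{n+1}}{h_1(n+1)}-\tfrac{x_n}{h_1(n)}\bigr|\leq L/(h_1(n+1)h_1(n))$, sums the telescoping tail against $h_k(n)\geq 2^{n-1}$ to bound $|x_n-h_1(n)\mu_1|$, and concludes $\Vert P(n)v_0\Vert\leq C2^{-n}$. You instead combine the invariance relation $V(1)=V(n)P(n)$ with the determinant identity $\det[P(n)v_0\mid H(n)]=\Delta_n\det[v_0\mid H(1)]=\pm1$, and Cramer's rule plus Equation~\eqref{B2} hands you the \emph{exact} values $|\alpha_n|=\mu(B_2(n))$, $|\beta_n|=\mu(B_1(n))$; the bound $\mu(B_k(n))\leq 1/h_k(n)\leq 2^{1-n}$ then finishes. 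This is sharper (an identity rather than an estimate, and it makes transparent that unimodularity is the whole point) at the cost of using the standing hypothesis $H(1)=(1,1)^t$ to get $\det[v_0\mid H(1)]=1$. For the converse the paper re-invokes the computation from the proof of Proposition~\ref{description} showing that $\Vert P(n)v\Vert\to 0$ forces $\mu_1v_1+\mu_2v_2=0$; your dimension count (the decay set is a subspace of $\RR^2$ containing $v_0$ but not $H(1)$, since $\Vert H(n)\Vert\to\infty$, hence equals $\RR v_0$) is softer and self-contained, though it only works because the direct implication has already been established and the ambient dimension is $2$. Both arguments are complete; yours is arguably the cleaner of the two here, while the paper's telescoping method is the one that generalizes when the determinants are merely bounded rather than unimodular.
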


\begin{proof}
 We write $\mu_1=\mu(B_1(1))$ and $\mu_2=\mu(B_2(1))$; clearly 
$\mu_1+\mu_2=1$.
 
Let $v\in\RR^2$.
As in the proof of Proposition~\ref{description}, if $\Vert 
P(n)v\Vert\to 0$ then 
$\mu_1v_1+\mu_2v_2=0$, and $v$ is collinear with  $(\mu_2,-\mu_1)$.
It remains to show that $\Vert P(n)v\Vert\to 0$ exponentially when 
$v$ is collinear with  $(\mu_2,-\mu_1)$. Obviously we can restrict 
ourselves to the case where  these vectors are equal.

We check that $x_{n+1}h_1(n)-x_nh_1(n+1)=-m_{1,2}(n+1)\Delta_n$, and 
deduce that
$$
\Bigl| \frac{x_{n+1}}{h_1(n+1)}-\frac{x_n}{h_1(n)}\Bigr|
=\Bigl|\frac{m_{1,2}(n+1)\Delta_n}{h_1(n+1)h_1(n)}\Bigr|
\leq \frac L{h_1(n+1)h_1(n)}\ .
$$

Since $x_n/h_1(n)\to \mu_1$ as $n \to \infty$, we get
$$
\Bigl|\frac {x_n}{h_1(n)}-\mu_1\Bigr|\leq 
\sum_{i=n}^\infty \frac L{h_1(i+1)h_1(i)}
\leq \frac C{2^nh_1(n)}
$$
for some constant $C$. From $x_n+y_n=h_1(n)$ and $\mu_1+\mu_2=1$, 
we obtain
$$
|x_n\mu_2-y_n\mu_1|=|x_n-h_1(n)\mu_1|\leq C2^{-n}\ .
$$
In a similar way, we have $|z_n\mu_2-w_n\mu_1|\leq C2^{-n}$. We conclude
$\Vert P(n)v\Vert\leq C2^{-n}$.
\end{proof}

\begin{prop}
Let $(X,T,\mu)$ be a linearly recurrent Cantor system and let the
 associated sequence of matrices $(M(n);n \ge 1)$ be $2\times 
2$ matrices 
with entries in $\{1, \dots ,L\}$ and determinant $\pm 1$. 
Suppose also that $h_1(1)=h_2(1)=1$.
Let $\alpha\in\RR$ and $\lambda =\exp(2i\pi\alpha)$.
Then $\lambda$ is an  eigenvalue 
of $(X,T,\mu)$ if and only if $\alpha$ belongs to  the set
$\{p_1\mu(B_1(1))+p_2\mu(B_2(1)): p_1,p_2 \in \ZZ \}$. 
Moreover every eigenfunction of $(X,T,\mu)$ is continuous. 
\end{prop}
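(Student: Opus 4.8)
The plan is to prove the two implications separately and then deduce continuity of every eigenfunction from ergodicity. Throughout I would write $\mu_1=\mu(B_1(1))$ and $\mu_2=\mu(B_2(1))$, so that $\mu_1+\mu_2=1$, and exploit that the hypothesis $h_1(1)=h_2(1)=1$ makes $H(1)=(1,1)^t$, whence $P(n)H(1)=H(n)$ for every $n$.

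For the sufficiency, assume $\alpha=p_1\mu_1+p_2\mu_2$ with $p_1,p_2\in\ZZ$ and set $p=(p_1,p_2)^t$. First I would observe that the vector $\alpha H(1)-p=(\alpha-p_1,\alpha-p_2)^t$ is collinear with $(\mu_2,-\mu_1)$: this collinearity is equivalent to $\mu_1p_1+\mu_2p_2=\alpha(\mu_1+\mu_2)=\alpha$, which is precisely the hypothesis. Lemma~\ref{lemme18} then gives $\Vert P(n)(\alpha H(1)-p)\Vert\to 0$ exponentially. Since $P(n)p$ is an integer vector, one has
\[
\vvert\alpha P(n)H(1)\vvert=\vvert\alpha H(n)-P(n)p\vvert\le\Vert P(n)(\alpha H(1)-p)\Vert\to 0
\]
exponentially, so $\sum_{n\ge1}\vvert\alpha P(n)H(1)\vvert<\infty$ and Theorem~\ref{th:condsuffmat}(2) shows $\lambda$ is a continuous eigenvalue.

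For the necessity, suppose $\lambda$ is an eigenvalue. By Theorem~\ref{th:condsuffmat}(1) the vector $u=\alpha H(1)$ satisfies $\vvert P(n)u\vvert\to 0$, so I would invoke Lemma~\ref{tendto0} to obtain $m\in\NN$, an integer vector $w$ and a real vector $v$ with $\alpha H(m)=P(m)u=w+v$ and $\Vert P(n,m)v\Vert\to 0$. Repeating the computation in the proof of Proposition~\ref{description}, this last convergence forces $\sum_{j=1}^{C(m)}v_j\mu(B_j(m))=0$; writing $v_j=\alpha h_j(m)-w_j$ and using $\sum_j h_j(m)\mu(B_j(m))=1$ from Equation~\eqref{B2} yields $\alpha=\sum_j w_j\mu(B_j(m))$. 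Now the determinant hypothesis enters: $P(m)$ is an integer matrix with $\det P(m)=\pm1$, so $P(m)^{-1}$ has integer entries, and iterating Equation~\eqref{B1} gives, in matrix form, $\bigl(\mu_1,\mu_2\bigr)=\bigl(\mu(B_1(m)),\mu(B_2(m))\bigr)P(m)$, hence $\bigl(\mu(B_1(m)),\mu(B_2(m))\bigr)=\bigl(\mu_1,\mu_2\bigr)P(m)^{-1}$. Substituting and putting $p=P(m)^{-1}w\in\ZZ^2$ gives $\alpha=p_1\mu_1+p_2\mu_2$ with $p_1,p_2\in\ZZ$, as wanted.

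Finally, since $(X,T,\mu)$ is uniquely ergodic and hence ergodic, the eigenspace of each eigenvalue is one-dimensional, so any $L^2$ eigenfunction for $\lambda$ agrees almost everywhere with the continuous eigenfunction produced above; thus every eigenfunction is continuous. I expect the necessity direction to be the main obstacle, and within it the passage from the level-$m$ identity $\alpha=\sum_j w_j\mu(B_j(m))$ back to level $1$: this is exactly the step where $\det M(n)=\pm1$ is essential, since it allows $P(m)$ to be inverted over $\ZZ$ while keeping the coefficients integral.
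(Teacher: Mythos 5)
Your proof is correct. The sufficiency direction is essentially the paper's: decompose $\alpha H(1)$ as an integer vector plus a vector collinear with $\bigl(\mu_2,-\mu_1\bigr)$ and combine Lemma~\ref{lemme18} with Theorem~\ref{th:condsuffmat}(2). The necessity direction shares the paper's skeleton (Lemma~\ref{tendto0} plus unimodularity of $P(m)$) but extracts the integer coefficients by a different mechanism. The paper pulls the decomposition $P(m)u=w+v$ back to level $1$ by setting $v'=P(m)^{-1}v$ and $w'=P(m)^{-1}w$ (unimodularity making $w'$ integral), then invokes the converse half of Lemma~\ref{lemme18} to write $v'=k(\mu_2,-\mu_1)$ and reads off $k=v'_1-v'_2=-w'_1+w'_2\in\ZZ$ from $u_1=u_2=\alpha$. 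You instead stay at level $m$: you rerun the measure computation from the proof of Proposition~\ref{description} to get $\alpha=\sum_j w_j\mu(B_j(m))$, and only then descend to level $1$ via the dual relation $\bigl(\mu(B_1(m)),\mu(B_2(m))\bigr)=(\mu_1,\mu_2)P(m)^{-1}$, with $p=P(m)^{-1}w\in\ZZ^2$. Both arguments pivot on exactly the same point --- $\det P(m)=\pm1$ lets $P(m)$ be inverted over $\ZZ$ --- but yours avoids Lemma~\ref{lemme18} in this direction, handles the case $v=0$ uniformly rather than as a separate branch, and would generalize verbatim to unimodular matrices of any fixed size, whereas the paper's version is tied to the explicit $2\times2$ collinearity characterization. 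Your closing appeal to ergodicity (one-dimensional eigenspaces, so the $L^2$ eigenfunction agrees a.e.\ with the continuous one) is the standard reading of ``every eigenfunction is continuous'' and matches what the paper leaves implicit.
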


\begin{proof}
Let $\lambda = \exp (2i \pi \alpha )$ be an eigenvalue of 
$(X,T,\mu)$. 
By Theorem~\ref{th:condsuffmat} we have $\vvert \alpha 
P(n)H(1)\vvert\to 0$ as $n \to \infty$.
We take $u=\alpha H(1)$, and let $v,w$  and $m$  as given by 
Lemma~\ref{tendto0}. We write $v'=P(m)^{-1}v$ and  $w'=P(m)^{-1}w$.
We have $\alpha H(1)=v'+w'$, $\Vert P(n)v'\Vert\to 0$ as $n\to \infty$, and
the vector $w'$ is an integer vector because the matrix $P(m)$ has 
integer entries and $|det ( P(m))|=1$. By Lemma~\ref{lemme18}, there exists  $k\in\RR$ with 
$v'_1=k\mu(B_2(1))$ and $v'_2=-k\mu(B_1(1))$.
Since $u_1=u_2=\alpha$ and $\mu(B_1(1))+\mu(B_2(1))=1$, we have 
$k=v'_1-v'_2=-w'_1+w'_2\in\ZZ$, and $\alpha$ has the announced form.

Conversely, if $\alpha=p_1\mu(B_1(1))+p_2\mu(B_2(1))$ for some 
integers $p_1$ and $p_2$, the vector $\alpha H(1)$ can be written as 
the sum of an integer vector $w$ and a vector $v$ collinear with 
$\bigl(\mu(B_2(1)), -\mu(B_1(1))\bigr)$. By Lemma~\ref{lemme18},
$\Vert P(n) v \Vert\to 0$ exponentially as $n\to \infty$, thus $\vvert \alpha 
P(n) H(1) \vvert\to 0$ exponentially as $n\to \infty$, and $\alpha$ is a continuous 
eigenvalue by Theorem~\ref{th:condsuffmat}.
\end{proof}

\subsection{Example 5: Two commuting matrices}

Let $(X,T,\mu)$ be a linearly recurrent system with  $H (1) = (1,1)^t$.
We assume that each matrix 
$M(n)$, $n \ge 2$, is one of the two following ones:
\[
A=\left[ \begin{array}{cc}
5 & 2\\
2 & 3
\end{array}\right]
\hbox{ and } 
B=\left[ 
\begin{array}{cc}
2 & 1\\
1 & 1
\end{array}
\right] .
\]
We notice that the matrices $A$, $B$ have the same eigenvectors and 
commute.
We write $\alpha_1,\alpha_2$ for the eigenvalues of $A$, with 
$\alpha_1>\alpha_2>1$, and $\beta_1,\beta_2$ for the eigenvalues of 
$B$, with $\beta_1>1>\beta_2>0$.
We set $\delta=-\log(\beta_2)/ \log(\alpha_2/\beta_2)$. 
For every $n$ we write $a_n$ (respectively $b_n$) for  the number of 
occurrences of $A$ (respectively $B$) in the  sequence $M(2),\dots,M(n)$.
For every $n$ the eigenvalues of $P(n)$ are 
$\alpha_1^{a_n}\beta_1^{b_n}$ and $\alpha_2^{a_n}\beta_2^{b_n}$, and 
we have
$$
H(n)=\alpha_1^{a_n}\beta_1^{b_n}u_1+ \alpha_2^{a_n}\beta_2^{b_n}u_2
$$
where $u_1$, $u_2$ are two non-zero vectors.

It is not difficult to show by induction that $\gcd(h_m(1),h_m(2))=1$ 
for every $m$. 
Assume first that $\limsup  a_n/n >\delta$. 
There does not exist any $m\geq 2$ and  $0\neq v\in\RR^2$ with 
$\lVert P(n,m)v\rVert\to 0$. By Lemma~\ref{tendto0}, there does not exist  
any $u\in\RR^2\setminus\ZZ^2$ such that $\vvert P(n) u\vvert\to 0$. 
By Theorem~\ref{th:condsuffmat} the system is weakly mixing.

Let us assume now that  $\limsup a_n/n <\delta$. 
If $m$ is an integer and $v$ a vector such that $\lVert P(n,m) 
v\rVert\to 0$ as $n\to+\infty$, then $v$ is collinear with $u_2$ and 
the convergence is exponential. It then follows from 
Lemma~\ref{tendto0} and Theorem~\ref{th:condsuffmat} that every 
eigenfunction is continuous. Moreover, there exist  real numbers 
$\alpha,s$ with  $\alpha\notin\ZZ$  such that 
the vector $(\alpha,\alpha)-sv_2$ belongs to $\ZZ^2$. 
Then $\exp(2\pi i\alpha)$ is an eigenvalue, and the system is not 
weakly mixing.
We summarize:

\begin{itemize}
\item
If $\limsup\, a_n/n >\delta$ then the system is weakly mixing.
\item
If $\limsup\, a_n/n <\delta$ then the system is not weakly mixing, and 
all of its eigenfunctions are continuous.
\end{itemize}

\section{A random linearly recurrent system}
\label{sec:ingeneral}
The preceding examples lead to the somewhat vague intuition that 
for ``most'' of the linearly recurrent systems the eigenfunctions are 
continuous. We test here this guess by building random linearly 
recurrent systems in a natural and relatively general way.

Let $\M$ be a finite set of matrices, not assumed to be of the same 
size, and let $\M^{\NN}$ be endowed with 
the shift $S$ and with the product topology. 
We write $\Omega$ for the 
subset of $\M^{\NN}$ consisting in sequences $(M_n)$ such that the 
product $M(n+1)M(n)$ is defined for every $n$ and we assume that 
$\Omega$ is not empty. $\Omega$ is a closed $S$-invariant subset of 
$\M^{\NN}$, and $(\Omega,S)$ is a subshift of finite type. 
We write $M=(M(n))$ for an element of $\Omega$.

For every sequence $M$ in $\Omega$ let $(X_M,T)$ be a 
linearly recurrent system associated to this sequence.
By choosing a  probability measure $\nu$ on $\Omega$ we get a random 
linearly recurrent system. We henceforth assume that $\nu$ is 
invariant and ergodic under $S$.
We show now that under some natural hypothesis the eigenfunctions of 
$X_M$ are continuous for $\nu$-almost every $M$.

Let $k$ be  the maximum size of the elements 
of $\M$. By completing each  of these elements by zero entries we can 
consider them as $k\times k$ matrices. We choose a norm on $\RR^k$ and 
a norm on $\L(\RR^k)$. Let  $A$ be the map from 
$\Omega$ to $\L(\R^k)$ which maps each sequence $M$ to $M(2)$.
The function $M\mapsto \log^+(\lVert A(M)\rVert)$ is bounded and thus 
belongs to $L^1(\nu)$.

By Oseledets Theorem (see~\cite{Wa}) there exists a measurable 
subset $B$ of $\Omega$, invariant under $S$ and of full measure, such 
that for every $M\in B$ the limit  
\begin{equation}
\label{eq:oseledets}
\lim_{n\to +\infty}\frac 1n\log 
\lVert A(S^{n-1}M)\circ \cdots \circ A(SM)\circ A
(M)(v)\|
\end{equation}
exists in $\RR\cup\{\pm\infty\}$.

We say that  $(\Omega,S,\nu)$ is \emph{hyperbolic} if the set $B$ can 
be chosen so that 
for every $M\in B$ and every $v\in\RR^k$ this limit is non-zero.
 Henceforth we  assume that this condition holds and show that  
for any $M\in B$  the eigenfunctions 
of $X_M$ are continuous.

Let $m$ be an integer and $v\in\RR^{C(m)}$ a vector such that
$\lVert P(n,m)v\rVert\to 0$ as $n\to +\infty$.
As $B$ is invariant under $S$, the sequence $(M(n):n>m)$ belongs to 
$B$ and the limit~\eqref{eq:oseledets} exists. By hypothesis this 
limit can not be positive, and it is non zero by hyperbolicity, then 
it is negative. It follows that  $\lVert P(n,m)v\rVert\to 0$ 
exponentially. 

By Theorem~\ref{th:condsuffmat} and Lemma~\ref{tendto0}, every eigenfunction of $X_M$ is 
continuous for $M\in B$ and thus $\nu$-almost everywhere.

\section{Questions}

Is it true that all eigenvalues of linearly recurrent systems are 
continuous ? 
If it is not true, is the result of Section~\ref{sec:ingeneral} true 
without 
the assumption of ``hyperbolicity'' ? 
If the answer is again negative could we find some necessary and 
sufficient condition to have only 
continuous eigenvalues ?
In fact, it seems that the existence of non continuous eigenfunctions is
not only a property of the sequence of matrices, but it depends on other
elements of the dynamics.

\medskip

{\bf Acknowledgments.} The authors acknowledge financial support
from FONDAP program in Mathematical Modeling, 
FONDECYT 1010447, ECOS-Coni\-cyt C99E10 and Programa Iniciativa Cientifica Milenio P01-005. This paper was mainly 
written when the second author was ``d\'etach\'e au CNRS" at the 
Centro de Modelamiento Matem\'atico UMR 2071.

\end{document}